\title[Breaking Bivariate Records]
{Breaking Bivariate Records}
\newcommand\urladdrx[1]{{\urladdr{\def~{{\tiny$\sim$}}#1}}}
\author{James Allen Fill}
\address{Department of Applied Mathematics and Statistics,
The Johns Hopkins University,
3400 N.~Charles Street,
Baltimore, MD 21218-2682 USA}
\email{jimfill@jhu.edu}
\thanks{Research for both authors supported by
the Acheson~J.~Duncan Fund for the Advancement of Research in
Statistics.}
\keywords{Bivariate records, Pareto records, record breaking, Geometric distribution, current records, maxima, time change, Glivenko--Cantelli type theorems, asymptotics}
\subjclass[2010]{Primary:\ 60D05; Secondary:\ 60F05, 60F15, 60G17}
\numberwithin{equation}{section}
\theoremstyle{plain}
\newtheorem{theorem}{Theorem}[section]
\newtheorem{lemma}[theorem]{Lemma}
\newtheorem{proposition}[theorem]{Proposition}
\newtheorem{conj}[theorem]{Conjecture}
\theoremstyle{definition}
\newtheorem{example}[theorem]{Example}
\newtheorem{definition}[theorem]{Definition}
\newtheorem{remark}[theorem]{Remark}
\newtheorem*{acks}{Acknowledgements}
\theoremstyle{remark}
\newenvironment{romenumerate}[1][-10pt]{
\addtolength{\leftmargini}{#1}\begin{enumerate}
 }{\end{enumerate}}
\newcounter{oldenumi}
{\setcounter{oldenumi}{\value{enumi}}
\begin{romenumerate} \setcounter{enumi}{\value{oldenumi}}}
{\end{romenumerate}}
\newcounter{thmenumerate}
\newcounter{xenumerate}   
\newcommand{\refT}[1]{Theorem~\ref{#1}}
\newcommand{\refL}[1]{Lemma~\ref{#1}}
\newcommand{\refR}[1]{Remark~\ref{#1}}
\newcommand{\refS}[1]{Section~\ref{#1}}
\newcommand{\refP}[1]{Proposition~\ref{#1}}
\newcommand{\refF}[1]{Figure~\ref{#1}}
\newcommand{\refConj}[1]{Conjecture~\ref{#1}}
\newcommand{\refTab}[1]{Table~\ref{#1}}
\newcommand\marginal[1]{\marginpar{\raggedright\parindent=0pt\tiny #1}}
\newcommand\REM[1]{{\raggedright\texttt{[#1]}\par\marginal{XXX}}}
\xdef\klockan{\the\count1.0\the\count255}
\xdef\klockan{\the\count1.\the\count255}\fi
\newcommand\noqed{\renewcommand{\qed}{}} 
\def\rompar(#1){\textup(#1\textup)}    
\def\xexp(#1){e^{#1}}
\newcommand\half{\tfrac12}
\newcommand\punkt{.\spacefactor=1000}    
\newcommand\iid{i.i.d\punkt}
\newcommand{\as}{a.s\punkt}
\newcommand{\io}{i.o\punkt}
\newcommand{\tend}{\longrightarrow}
\newcommand\Pto{\overset{\mathrm{P}}{\tend}}
\newcommand\asto{\overset{\mathrm{a.s.}}{\tend}}
\newcounter{CC}
\newcounter{cc}
\newcommand\E{\operatorname{\mathbb E{}}}
\renewcommand\P{\operatorname{\mathbb P{}}}
\renewcommand\L{\operatorname{L}}
\newcommand\Var{\operatorname{Var}}
\newcommand\dd{\,\mathrm{d}}
\newcommand\ddx{\mathrm{d}}
\newcommand\ee{\mathbf e}
\newcommand\xx{\mathbf x}
\newcommand\XX{\mathbf X}
\newcommand\doi{D_{01}}
\newcommand\tp{\tilde p}
\newcommand\tI{\widetilde I}
\newcommand\dx{D^*}
\newcommand{\ignore}[1]{}
\tikzset{>={Latex[width=5mm,length=5mm]}}
\pgfplotsset{compat=1.3}
\begin{document}

\date{January~23, 2019}

\maketitle

\begin{abstract}
We establish a fundamental property of bivariate Pareto records for independent observations uniformly distributed in the unit square.  We prove that the asymptotic conditional distribution of the number of records broken by an observation given that the observation sets a record is Geometric with parameter $1/2$. 
\end{abstract}

\section{Introduction and main result}
\label{S:intro}

This paper proves an interesting phenomenon concerning the breaking of bivariate records first observed empirically by Daniel~Q.\ Naiman, whom we thank for an introduction to the problem considered.  We begin with some relevant definitions, taken (with trivial changes) from \cite{Fillboundary(2018), Fillgenerating(2018)}.  Although our attention in this paper will be focused on dimension $d = 2$ (see 
\cite[Conj.~2.2]{Fillgenerating(2018)} for general~$d$), and the approach we utilize seems to be limited to the bivariate case, we begin by giving definitions that apply for general dimension~$d$.

Let ${\bf 1}(E) = \mbox{$1$ or $0$}$ according as~$E$ is true or false.
We write $\ln$ or $\L$ for natural logarithm, $\lg$ for binary logarithm, and 
$\log$ when the base doesn't matter.
For $d$-dimensional vectors $x = (x_1, \dots, x_d)$ and $y = (y_1, \dots, y_d)$,
write $x \prec y$ 
to mean that $x_j < y_j$ 
for $j = 1, \dots, d$. 
The notation $x \succ y$ means $y \prec x$. 

As do Bai et al.~\cite{Bai(2005)}, we find it more convenient (in particular, expressions encountered in their computations and ours are simpler) to consider (equivalently) record-\emph{small}, rather than record-large, values.  Let $\XX^{(1)}, \XX^{(2)}, \dots$ be \iid\ (independent and identically distributed) copies of a random vector~$\XX$ with independent coordinates, each uniformly distributed over the unit interval.

\begin{definition}
\label{D:record}
(a)~We say that $\XX^{(n)}$ is a \emph{Pareto record} (or simply \emph{record}, or that $\XX^{(n)}$ \emph{sets} a record at time~$n$) if $\XX^{(n)} \not\succ \XX^{(i)}$ for all $1 \leq i < n$.

(b)~If $1 \leq j \leq n$, we say that $\XX^{(j)}$ is a \emph{current record} (or \emph{remaining record}, or \emph{minimum}) at time~$n$ if $\XX^{(j)} \not\succ \XX^{(i)}$ for all $i \in [n]$. 

(c)~If $0 \leq k \leq n$, we say that $\XX^{(n)}$ \emph{breaks} (or \emph{kills}) $k$ records if $X^{(n)}$ sets a record and there exist precisely~$k$ values~$j$ with $1 \leq j < n$ such that $\XX^{(j)}$ is a current record at time $n - 1$ but is not a current record at time~$n$.
\end{definition}

For $n \geq 1$ (or $n \geq 0$, with the obvious conventions) let $R_n$ denote the number of records 
$\XX^{(k)}$ with $1 \leq k \leq n$, and let $r_n$ denote the number of remaining records at time~$n$. 

\ignore{
{\bf Is the definition of $\mbox{RS}_n$ needed?  If not, then delete.}
\begin{definition}
\label{D:RS}
The \emph{record-setting region} at time~$n$ is the (random) closed set of points
\[
\mbox{RS}_n := \{x \in [0, 1)^d: x \not\succ \XX^{(i)}\mbox{\ for all $i \in [n]$}\}.
\]
%
\end{definition}
}
\medskip

Here is the main result of this paper.

\begin{theorem}
\label{T:main}
Suppose that independent bivariate observations, each uniformly distributed in $(0, 1)^2$, arrive at times 
$1, 2, \ldots$.  
Let $K_n = -1$ if the $n^{\rm \scriptsize th}$ observation is not a new record, and otherwise let $K_n$ denote the number of remaining records killed by the $n^{\rm \scriptsize th}$ observation.  Then $K_n$, conditionally given $K_n \geq 0$, converges in distribution to 
$G - 1$,
where $G \sim \mbox{\rm Geometric$(1/2)$}$,
as $n \to \infty$.
\end{theorem}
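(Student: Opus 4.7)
The plan is to express the conditional probability as a ratio of expected areas. Conditionally on the configuration at time $n-1$, the point $\XX^{(n)}$ is uniformly distributed on $(0,1)^2$, so if $C$ denotes the (random) area of the record-setting region and $c_\ell$ the area of its sub-region corresponding to ``sets a record and kills exactly $\ell$,'' then
\[
\P(K_n = \ell \mid K_n \ge 0) = \frac{\E[c_\ell]}{\E[C]},
\]
and it suffices to prove $\E[c_\ell]/\E[C] \to 2^{-(\ell+1)}$ as $n \to \infty$. Labelling the current records at time $n-1$ as $M_j = (u_j, v_j)$, $1 \le j \le r$, with $u_1 < \dots < u_r$ and $v_1 > \dots > v_r$, and extending by $u_0 := 0$, $u_{r+1} := 1$, $v_0 := 1$, $v_{r+1} := 0$, the record-setting region decomposes into vertical strips $[u_j, u_{j+1}) \times [0, v_j]$; within the $j$-th strip, a point at height $y \in [v_{j+\ell+1}, v_{j+\ell})$ kills exactly $\ell$ records. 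Writing $a_j := u_{j+1} - u_j$ and $W_k := v_{k+1}/v_k$ (with $W_r := 0$), one finds
\[
C = \sum_{j=0}^{r} a_j v_j, \qquad c_\ell = \sum_{j=0}^{r-\ell} a_j v_j \biggl(\prod_{k=j}^{j+\ell-1} W_k\biggr)(1 - W_{j+\ell}).
\]

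The strategy is to show that, asymptotically, the $W_k$'s behave as \iid\ $\mathrm{Uniform}(0,1)$ variables independent of the weights $a_j v_j$, so that, using $\E[\prod_{k=0}^{\ell-1} W_k (1 - W_\ell)] = 2^{-(\ell+1)}$ for such $W_k$, one gets the termwise identity $\E[a_j v_j \prod W_k (1 - W_{j+\ell})] = 2^{-(\ell+1)} \E[a_j v_j]$, whose sum is the desired ratio. The structural input comes from passing to exponential coordinates $\alpha = -\ln X$, $\beta = -\ln Y$: the points become \iid\ $\Exp(1) \times \Exp(1)$, and sorting by $\alpha$ decreasingly makes the $\beta$-values an \iid\ $\Exp(1)$ sequence whose running maxima are exactly the current records. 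Letting $B_j$ denote the $j$-th record value, the memoryless property gives that $\Delta B_k := B_{k+1} - B_k$ are \iid\ $\Exp(1)$, so $W_k = e^{-\Delta B_k}$ are \iid\ $\mathrm{Uniform}(0,1)$. A dependency analysis shows $a_j v_j$ is a function of the $\alpha$-order statistics, the record indices $k_j, k_{j+1}$ (generated by the Markovian recursion $k_{i+1} - k_i \mid B_i \sim \mathrm{Geometric}(e^{-B_i})$, conditionally independent of $\Delta B_i$ by memorylessness), and $B_j = \sum_{i<j}\Delta B_i$; hence $a_j v_j$ depends only on $(\Delta B_0, \dots, \Delta B_{j-1})$ and quantities independent of $\beta$, making it independent of $(W_j, W_{j+1}, \dots)$.

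The hard part will be justifying the summation in the presence of the random upper truncation $j + \ell \le r$: the number of records $r$ depends on $(\Delta B_k, G_k)_{k\ge 0}$ (where $G_k := k_{k+1} - k_k$), so the indicator $\mathbf{1}\{j + \ell \le r\}$ is not independent of $W_j, \dots, W_{j+\ell-2}$. My plan is to choose a cutoff $J_n = \ln n - \omega(\sqrt{\ln n})$ and split $\E[c_\ell] = \sum_{j \le J_n} + \sum_{j > J_n}$; the first sum is handled by the termwise independence because $\P(r \ge J_n + \ell) \to 1$, while the tail $\sum_{j > J_n} \E[a_j v_j]$ is controlled by concentration of $r$ around $\ln n$ to be of lower order than $\E[C] \asymp (\ln n)/n$. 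A separate check handles the boundary cell $j = r - \ell$, where $W_r = 0$ rather than $\mathrm{Uniform}(0,1)$: this single term contributes $\E[a_{r-\ell} v_r]$, which on typical scales $a_{r-\ell} = O(e^{-\ell})$ and $v_r = O(1/n)$ is $O(1/(n e^\ell))$, negligible against $\E[C]$. Combined, these estimates give $\E[c_\ell]/\E[C] \to 2^{-(\ell+1)}$, which is exactly the $\mathrm{Geometric}(1/2) - 1$ distribution claimed.
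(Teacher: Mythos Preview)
Your route is quite different from the paper's.  The paper never conditions on the configuration of current records; instead it writes $\P(K_n=k)$ as an explicit multiple integral over the positions of the new point, the $k$ records it kills, and the two neighboring surviving records, and splits into three boundary cases $A_k+2B_k+C_k$.  Integrating out a single variable yields the recurrences $A_k=\tfrac12(A_{k-1}-B_k)$ and $B_k=\tfrac12(B_{k-1}-C_k)$, which---combined with exact values of $A_0,B_0$ and the identity $\sum_k C_k=n^{-2}$---give the quantitative estimate
\[
\Bigl|\,\P(K_n=k)-\bigl[2^{-(k+1)}n^{-1}H_n-(k-1)2^{-(k+2)}n^{-1}\bigr]\Bigr|\le\tfrac12 n^{-2},
\]
uniformly in~$k$, from which \refT{T:main} is immediate.

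Your structural observation is correct and attractive: under the exponential change of variables the record increments $\Delta B_k$ are \iid\ $\Exp(1)$, so $W_k=v_{k+1}/v_k$ are \iid\ Uniform$(0,1)$; and in the \emph{infinite}-sequence model $a_jv_j$ is indeed independent of $(W_j,W_{j+1},\dots)$, since $k_{j+1}=k_j+G_j$ with $G_j$ conditionally independent of $\Delta B_j$ given $B_j$.  This gives a genuinely probabilistic explanation of the factor $2^{-(\ell+1)}$ that the paper obtains only after solving its recurrences.

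However, the truncation step has a real gap.  For $j\le J_n$ you propose to replace $\mathbf 1\{j+\ell\le r\}$ by $1$ and then factor; the error incurred, summed over $j\le J_n$, is bounded by $\E\bigl[C\,\mathbf 1\{r<J_n+\ell\}\bigr]$, and you need this to be $o(\E[C])=o(n^{-1}\ln n)$.  But the only control you invoke is $\P(r\ge J_n+\ell)\to1$, and the crude bound $\E[C\,\mathbf 1\{r<J_n+\ell\}]\le\P(r<J_n+\ell)$ is far too weak: for the Poisson--binomial $r$ one has $\P(r<(1-\eps)\ln n)=n^{-I(\eps)+o(1)}$ with rate $I(\eps)=\eps+(1-\eps)\ln(1-\eps)<1$ for every $\eps\in(0,1)$, so this probability is never $o(n^{-1}\ln n)$ for any $J_n>0$.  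To close the gap you need an additional ingredient not in your plan---for instance a uniform bound $\E[a_jv_j\,\mathbf 1\{r\ge j+1\}]=O(1/n)$ (which your own heuristic $a_jv_j\approx(1-e^{-1})/n$ suggests and which can be obtained by a direct integration over the $j$th strip), or a second-moment estimate $\E[C^2]=O((\E C)^2)$ permitting a Cauchy--Schwarz argument.  Either would let an $\eps$-argument with $J_n=(1-\eps)\ln n$ go through.  Even once completed, your method yields only the bare convergence in distribution, not the first-order correction term $-(k-1)2^{-(k+2)}H_n^{-1}$ or the uniform $O(n^{-2})$ error that the paper's integral recurrences deliver.
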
 

Equivalently, the conclusion (with asymptotics throughout referring to $n \to \infty$) is that
\begin{equation}
\label{convergence}
\P(K_n = k\,|\,K_n \geq 0) \to 2^{- (k + 1)}\mbox{\ for each (fixed) integer $k \geq 0$}.
\end{equation}
Here is an outline of the proof.
In \refS{S:denominator} we provide a simple and short proof of the well-known result that
\[
\P(K_n \geq 0) = n^{-1} H_n, \quad n \geq 1,
\]
where $H_n = \sum_{i = 1}^n i^{-1}$ denotes the $n^{\rm \scriptsize th}$ harmonic number.
In \refS{S:numerator} (see \refT{T:numerator}) we show that
\begin{equation}
\label{remainder}
\left| \P(K_n = k) - \Big[ 2^{- (k + 1)} n^{-1} H_n - (k - 1) 2^{- (k + 2)} n^{-1} \Big] \right|
\leq \tfrac{1}{2} n^{-2}
\end{equation}
for all $n \geq 1$ and all $k \geq 0$.
The improvement
\begin{equation}
\label{improvement}
\left| \P(K_n = k\,|\,K_n \geq 0) - \Big[ 2^{- (k + 1)} + \alpha_{n, k} \Big] \right| 
\leq \tfrac{1}{2} n^{-1} H_n^{-1}
\end{equation}
to~\eqref{convergence}
then follows immediately, where $\alpha_{n, k}$ is a first-order correction term with
\[
\alpha_{n, k} := - (k - 1) 2^{- (k + 2)} H_n^{-1}
\]
to the Geometric$(1/2)$ probability mass function (pmf) $2^{- (k + 1)}$.  This improvement shows that approximation of the conditional pmf in \refT{T:main} by the uncorrected Geometric$(1/2)$ pmf has (for large~$n$) vanishingly small relative error not just for fixed~$k$, but for $k \equiv k_n = o(\log n)$.  It also shows that the corrected approximation has small relative error for $k \leq \lg n + \lg \log n - \omega(1)$.  Of course we always have $K_n \leq r_{n - 1}$, and, by \cite[Rmk.~4.3(b)]{Fillboundary(2018)} we have 
$r_n = O(\log n)$ almost surely; the corrected approximation thus gives small relative error for rather large values of~$k$ indeed.

As one might expect, the correction terms sum to~$0$.  We observe that the correction is positive (and of largest magnitude in absolute-error terms) when $k = 0$, vanishes when $k = 1$, and is negative (and of nonincreasing magnitude) when $k \geq 2$.

Formulation of \refT{T:main} was motivated by \cite[Table~1]{Fillgenerating(2018)}, 
reproduced here as
\refTab{Table1}.
\refTab{Table1}
tabulates, for the first 100,000 records generated in a single trial, the number of records that break~$k$ remaining records, for each value of~$k$.  The Geometric$(1/2)$ pattern is striking.  
The precise relationship between \refT{T:main} and the phenomenon observed in 
\refTab{Table1} is discussed in \refS{S:more}, where a main conjecture is stated and a possible plan for completing its proof is described.
\medskip  

Throughout, we denote the $n^{\rm \scriptsize th}$ observation $\XX^{(n)}$ simply by $\XX = (X, Y)$ (note:\ \emph{sub}scripted~$\XX$ will have a different later use) and, for any Borel subset~$S$ of $(0, 1)^2$, the number of the first~$n$ observations falling in~$S$ by $N_n(S)$.

\begin{center}
\begin{table}
\begin{tabular}{crc}
$k$&
$N_k$\ \ \ &
$\tilde{p}_k$ \\ \hline
0&50,334&0.50334\\
1&24,667&0.24667\\
2&12,507&0.12507\\
3&63,35&0.06335\\
4&3,040&0.03040\\
5&1,571&0.01571\\
6&782&0.00782\\
7&364&0.00364\\
8&202&0.00202\\
9&94&0.00094\\
10&48&0.00048\\
11&24&0.00024\\
12&18&0.00018\\
13&8&0.00008\\
14&4&0.00004\\
16&1&0.00001\\
17&0&0.00000\\
18&1&0.00001\\
\ & \ & \ \\
\end{tabular}
\caption{Results of a simulation experiment in which \mbox{$M = $ 100,000} bivariate records are generated, and for each new record the number~$k$ of records it breaks is recorded. 
The number of records that break~$k$ current records is denoted by $N_k$, and $\tilde{p}_{M, k} = N_k / M$ is the proportion of the 100,000 records that break~$k$ records.}
\label{Table1}
\end{table}
\end{center}

\section{The probability that $K_n \geq 0$}
\label{S:denominator}

In this section we compute the probability $\P(K_n \geq 0)$ (that the $n^{\rm \scriptsize th}$ observation is a record) exactly and approximate it asymptotically.  This result is already well known, 
but we give a proof for completeness.

\begin{proposition}
For $n \geq 1$ we have
\label{P:denominator}
\[
\P(K_n \geq 0) = n^{-1} H_n.
\]
\end{proposition}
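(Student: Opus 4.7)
The plan is to reduce the bivariate record question to a univariate calculation by conditioning on the rank of the $X$-coordinate of the final observation among the $n$ observed $X$-values.

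First, I would unpack the event. By \refD{D:record}, $\{K_n \geq 0\}$ is precisely the event that $\XX^{(n)} = (X, Y)$ sets a Pareto record, which in dimension two means there is no index $i < n$ with both $X^{(i)} < X$ \emph{and} $Y^{(i)} < Y$. Since the $2n$ coordinate values are independent and continuous, almost surely all are distinct, so the rank
\[
K := \bigl|\{i \in [n] : X^{(i)} \leq X\}\bigr|
\]
is well defined; and by exchangeability of $X^{(1)}, \dots, X^{(n)}$, the random variable $K$ is uniformly distributed on $\{1, \dots, n\}$.

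Next, I would carry out the key conditioning. Given $K = k$---and also given the identity of the random set $S \subseteq [n]$ (necessarily containing~$n$) of indices whose $X$-coordinate is at most~$X$---the record event $\{K_n \geq 0\}$ reduces to $\{Y^{(n)} = \min_{i \in S} Y^{(i)}\}$. Because the $Y$-coordinates are independent of the $X$-coordinates and are iid Uniform$(0,1)$, the family $\bigl(Y^{(i)}\bigr)_{i \in S}$ remains iid Uniform$(0,1)$ under this conditioning; in particular, the minimum is equally likely to be attained at each of the $k$ indices in~$S$, so the conditional probability in question is $1/k$.

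Combining the two pieces yields
\[
\P(K_n \geq 0) = \sum_{k=1}^n \P(K = k) \cdot \frac{1}{k} = \sum_{k=1}^n \frac{1}{n} \cdot \frac{1}{k} = \frac{H_n}{n}.
\]
There is no serious obstacle here: the identity follows at once from the product structure of~$\XX$ and the uniform distribution of the rank~$K$. The only point requiring care is to phrase the joint conditioning so as to make transparent that, once the $X$-ranks are revealed, the $Y$-coordinates remain iid---hence exchangeable among the indices in~$S$---so that the symmetry argument pinpointing the minimum at index~$n$ is legitimate.
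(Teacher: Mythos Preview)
Your proof is correct but takes a genuinely different route from the paper's. The paper proceeds by direct integration: it observes that
\[
\P(K_n \geq 0,\ \XX \in \ddx\xx) = (1 - x y)^{n-1}\,\dd x\,\dd y,
\]
integrates out~$y$ to obtain $n^{-1} x^{-1}[1 - (1-x)^n]$, expands this as $n^{-1}\sum_{j=0}^{n-1}(1-x)^j$, and integrates over~$x$ to arrive at $n^{-1} H_n$. Your argument instead conditions on the rank of the $n^{\mathrm{th}}$ observation's $X$-coordinate among all~$n$ $X$-values, uses the independence of the two coordinates to leave the $Y$-values untouched under that conditioning, and identifies the record event as ``$Y^{(n)}$ is minimal among the $K$ relevant $Y$-values,'' which has conditional probability~$1/K$.

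What each approach buys: your rank-conditioning argument is combinatorial and integration-free, and it makes transparent why the harmonic number appears (as~$\E[1/K]$ for~$K$ uniform on~$[n]$); it is essentially the concomitants idea the paper alludes to elsewhere. The paper's direct-integration approach, by contrast, is the prototype for the much heavier computations in \refS{S:numerator}, where one must integrate densities over configurations of several points and no simple rank reduction is available; so while less elegant here, it sets up the machinery actually needed for \refT{T:numerator}. One small stylistic caution: you reuse the letter~$K$ for the rank, which clashes with the paper's~$K_n$; a different symbol would avoid confusion.
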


\begin{proof}
We have
\begin{align*}
\P(K_n \geq 0, \XX \in \ddx\xx)
&= \P(N_{n - 1}((0, x) \times (0, y)) = 0, \XX \in \ddx\xx) \\
&= \P(N_{n - 1}((0, x) \times (0, y)) = 0)\,\P(\XX \in \ddx\xx) \\
&= (1 - x y)^{n - 1} \dd x \dd y.
\end{align*}
Integrating, we therefore have
\begin{align*}
\P(K_n \geq 0)
&= \int_{x = 0}^1 \int_{y = 0}^1 (1 - x y)^{n - 1} \dd y \dd x
= n^{-1} \int_{x = 0}^1 x^{-1} [1 - (1 - x)^n] \dd x \\
&= n^{-1} \sum_{j = 0}^{n - 1} \int_{x = 0}^1 (1 - x)^j \dd x = n^{-1} H_n,
\end{align*}
as claimed.
\end{proof}

\section{The probability that $K_n = k$}
\label{S:numerator}
In this section, we compute $\P(K_n= k)$ for $k \geq 0$ exactly and produce the approximation~\eqref{remainder} with its stated error bound.

\subsection{The exact probability}
\label{S:exact}

Over the event $\{K_n = k\}$ (with $k \geq 0$), denote those remaining records at time~$n - 1$ broken by 
$\XX$, in order from southeast to northwest (that is, in decreasing order of first coordinate and increasing order of second coordinate), by $\XX_1 = (X_1, Y_1), \ldots, \XX_k = (X_k, Y_k)$.  Note that if we read \emph{all} the remaining records in order from southeast to northwest, then 
$\XX_1, \ldots, \XX_k$ appear consecutively.

If there are any remaining records at time $n - 1$ with 
second 
coordinate smaller than~$Y$, choose the largest such 
second
coordinate $Y_0$ and denote the corresponding remaining record by $\XX_0 = (X_0, Y_0)$ [and note that then $\XX_0, \ldots,$\ $\XX_k$ appear consecutively]; otherwise, set $\XX_0 = (X_0, Y_0) = \ee_1 := (1, 0)$. 

Similarly, if there are any remaining records at time $n - 1$ with 
first
coordinate smaller than~$X$, choose the largest such 
first
coordinate $X_{k + 1}$ and denote the corresponding remaining record by 
$\XX_{k + 1} = (X_{k + 1}, Y_{k + 1})$ [and note that then $\XX_1, \ldots, \XX_{k + 1}$ appear consecutively]; otherwise, set $\XX_{k + 1} = (X_{k + 1}, Y_{k + 1}) = \ee_2 := (0, 1)$. 

Observe that, (almost surely) over the event $\{K_n= k\}$, we have $X_k > X > X_{k + 1}$ and 
$Y_1 > Y > Y_0$.  In results that follow we will only need to treat three cases: (i)~$\XX_0 \neq \ee_1$ and 
$\XX_{k + 1} \neq \ee_2$; (ii)~$\XX_0 = \ee_1$ and $\XX_{k + 1} \neq \ee_2$;
and (iii)~$\XX_0 = \ee_1$ and $\XX_{k + 1} = \ee_2$.  The fourth case~$\XX_0 \neq \ee_1$ and $\XX_{k + 1} = \ee_2$ can be handled by symmetry with respect to the second case.

Our first result of this section specifies the exact joint distribution of 
$\XX, \XX_0, \dots \XX_{k + 1}$.
We write $n^{\underline{k}}$ for the falling factorial power
\[
n (n - 1) \cdots (n - k + 1) = k! \mbox{${n \choose k}$},
\]
and we introduce the abbreviations
\[
\mbox{$\sum_j^k := \sum_{i = j}^k (x_{i - 1} - x_i) y_i, \qquad \sum^k := \sum_1^k$}
\]
for sums that will appear frequently in the sequel.

\begin{proposition}
\label{P:exact}
\ \\

\vspace{-.1in}
{\rm (i)} For $n \geq k + 3$ and
\[
1 > x_0 > \cdots > x_k > x > x_{k + 1} > 0\mbox{\rm \ \ and\ \ }0 < y_0 < y < y_1 < \cdots < y_{k + 1} < 1
\]
we have
\begin{align*}
\lefteqn{\hspace{-.3in}\P(K_n = k;\,\XX \in \dd\xx;\,
\XX_i \in \ddx\xx_i \mbox{\rm \ for $i = 0, \ldots, k + 1$})} \\
&= (n - 1)^{\underline{k + 2}} 
\left[1 - \left\{ \mbox{$\sum^k$} + x_k y_{k + 1} \right\} \right]^{n - (k + 3)}
\dd\xx \dd\xx_0 \cdots \ddx\xx_{k + 1}.
\end{align*}

{\rm (ii)} For $n \geq k + 2$ and
\[
1 > x_1 \cdots > x_k > x > x_{k + 1} > 0\mbox{\rm \ \ and\ \ }0 < y < y_1 < \cdots < y_{k + 1} < 1
\]
we have 
\begin{align*}
\lefteqn{\hspace{-.3in}\P(K_n = k;\,\XX \in \ddx\xx;\,
\XX_0 = \ee_1;\,\XX_i \in \ddx\xx_i \mbox{\rm \ for $i = 1, \ldots, k + 1$})} \\
&= (n - 1)^{\underline{k + 1}} \left[1 - \left\{ \mbox{$\sum^k$} +x_k y_{k + 1} \right\} \right]^{n - (k + 2)} \dd\xx \dd\xx_1 \cdots \ddx\xx_{k + 1}
\end{align*}
where here $x_0 = 1$.

{\rm (iii)} For $n \geq k + 1$ and
\[
1 > x_1 \cdots > x_k > x > 0\mbox{\rm \ \ and\ \ }0 < y < y_1 < \cdots < y_k < 1
\]
we have
\begin{align*}
\lefteqn{\hspace{-.5in}\P(K_n = k;\,\XX \in \ddx\xx;\,\XX_0 = \ee_1;\,
\XX_i \in \ddx\xx_i \mbox{\rm \ for $i = 1, \ldots, k$};\,\XX_{k + 1} = \ee_2)} \\
&= (n - 1)^{\underline{k}} \left[1 - \left\{ \mbox{$\sum^k$} + x_k \right\} \right]^{n - (k + 1)}
\dd\xx \dd\xx_1 \cdots \ddx\xx_k
\end{align*}
where here $x_0 = 1$.
\end{proposition}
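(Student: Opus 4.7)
The strategy, uniform across the three cases, is to decompose the joint probability into three factors: (a)~a combinatorial assignment $(n-1)\fall{m}$ (with $m = k+2$, $k+1$, $k$ in cases (i), (ii), (iii)) counting the ways of picking, from the $n-1$ pre-observations, those playing the roles of the named $\XX_0,\ldots,\XX_{k+1}$; (b)~the uniform density $\dd\xx\,\dd\xx_0\cdots\dd\xx_{k+1}$ at the specified positions; and (c)~the probability that every remaining pre-observation lies in a suitable allowed region $\mathcal R \subseteq (0,1)^2$. Steps (a) and (b) are immediate; the content of the proof is to identify $\mathcal R$ and to compute its Lebesgue measure.

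The proposed allowed region is $\mathcal R = (0,1)^2 \setminus \bigcup_{j=0}^{k}(0, x_j)\times(0, y_{j+1})$, with the boundary conventions $x_0 = 1$ in case (ii) and additionally $x_{k+1} = 0$, $y_{k+1} = 1$ in case (iii), so that the degenerate corners $\ee_1$ and $\ee_2$ fit into a single unified formula. A horizontal-slab computation gives $|\mathcal R^c| = x_0 y_1 + \sum_{i=1}^{k} x_i(y_{i+1} - y_i)$, which Abel summation rearranges as $\sum^k + x_k y_{k+1}$, producing the $[1-\{\sum^k + x_k y_{k+1}\}]^{n-(k+3)}$ factor of the claim (with the reduced exponents $n-(k+2)$ and $n-(k+1)$ emerging in cases (ii) and (iii) from the reduced combinatorial factors in step~(a)).

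The core set-theoretic assertion is this: conditionally on the named observations sitting at the specified locations, the event in the statement holds \emph{iff} every other pre-observation lies in $\mathcal R$. The direction ($\Leftarrow$) is routine: the inclusion $(0, x)\times(0, y) \subseteq \mathcal R^c$ forces $\XX$ to be a record; each $(0, x_i)\times(0, y_i) \subseteq \mathcal R^c$ preserves $\XX_i$'s Pareto-minimality; an allowed pre-observation falling in the strip $(x, 1)\times(y, 1)$ necessarily dominates some $\XX_j$ and so is not itself a current record, leaving $K_n = k$; and allowed pre-observations in $(0, x)\times(y, 1)$ or in $(x, 1)\times(0, y)$ can only become Pareto minima at coordinates beyond the named $\XX_{k+1}$ or $\XX_0$, preserving their extremal identifications.

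The hard direction ($\Rightarrow$), which I expect to be the main technical obstacle, must rule out the configuration's still holding in the presence of a pre-observation in $\mathcal R^c$. My plan is to pick $E^*$ at $(x^*, y^*)$, a pre-observation in $\mathcal R^c$ that is minimal among $\mathcal R^c$-pre-observations in the sense that no other has both coordinates strictly smaller. The key lemma is that $E^*$ is a genuine Pareto minimum of all pre-observations at time $n-1$: for any allowed $(a,b) \in \mathcal R$, if $E^* \in (0, x_{j^*})\times(0, y_{j^*+1})$, the definition of $\mathcal R$ gives $a \geq x_{j^*} > x^*$ or $b \geq y_{j^*+1} > y^*$, ruling out $(a,b)$ having both coordinates below $(x^*, y^*)$; and a direct coordinate-chase using $x_0 > x_1 > \cdots$ and $y_0 < y_1 < \cdots$ rules out any named $\XX_i$ from lying in $(0, x^*)\times(0, y^*)$ as well. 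A case split on where $E^*$ lives then forces a violation: either $E^* \in (0, x_i)\times(0, y_i)$ for some $i$ (knocking $\XX_i$ off the current-record set), or $E^*$ sits in a gap rectangle $(x_{j+1}, x_j)\times(y_j, y_{j+1})$, in which case---depending on the quadrant of $E^*$ relative to $\XX$---$E^*$ is an extra killed record inflating $K_n$, a Pareto minimum to the south-east of $\XX$ with $y$-coordinate exceeding $y_0$ violating $\XX_0$'s extremality, or the symmetric violation of $\XX_{k+1}$'s extremality. Cases (ii) and (iii) follow by the same argument, with the degenerate coordinate conventions on $\ee_1, \ee_2$ automatically collapsing the corresponding clauses.
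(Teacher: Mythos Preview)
Your approach is correct and essentially identical to the paper's: your forbidden region $\mathcal{R}^c = \bigcup_{j=0}^k (0,x_j)\times(0,y_{j+1})$ coincides (up to null sets) with the paper's staircase region $S = \bigcup_{i=1}^k [(x_i,x_{i-1})\times(0,y_i)] \cup [(0,x_k)\times(0,y_{k+1})]$, and the probability is then computed the same way via the falling-factorial count times $[1-\lambda(S)]^{n-(k+3)}$. The paper simply asserts the event equivalence without your detailed $(\Rightarrow)$ verification; that verification is sound, though your quadrant split in the gap-rectangle case omits the southwest quadrant, which can occur only when $k=0$ (for $k\ge 1$ every gap rectangle lies entirely north or east of $\XX$) and then immediately contradicts $K_n \ge 0$ via $E^* \prec \XX$.
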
 

\begin{proof}
We present only the proof of~(i); the proofs of~(ii) and~(iii) are similar.  
We shall be slightly informal in regard to ``differentials'' in our presentation.  
The key is that the event in question (almost surely) equals the following event:
\begin{equation}
\label{event}
\{N_{n - 1}(\ddx\xx_i) = 1\mbox{\rm \ for $i = 0, \ldots, k + 1$};\ N_{n - 1}(S) = 0;\ \XX \in \ddx\xx\}
\end{equation}
where~$S$ is the following disjoint union of rectangular regions:
\[
S = \cup_{i = 1}^k [(x_i, x_{i - 1}) \times (0, y_i)] \cup [(0, x_k) \times (0, y_{k + 1})].
\]
See \refF{F:figure}.
But the probability of the event~\eqref{event} is
\[
(n - 1)^{\underline{k + 2}} \left[ \prod_{i = 0}^{k + 1} \ddx\xx_i \right] \times [1 - \lambda(S)]^{n - (k + 3)} 
\times \ddx\xx,
\]
which reduces easily to the claimed result. 
\end{proof}

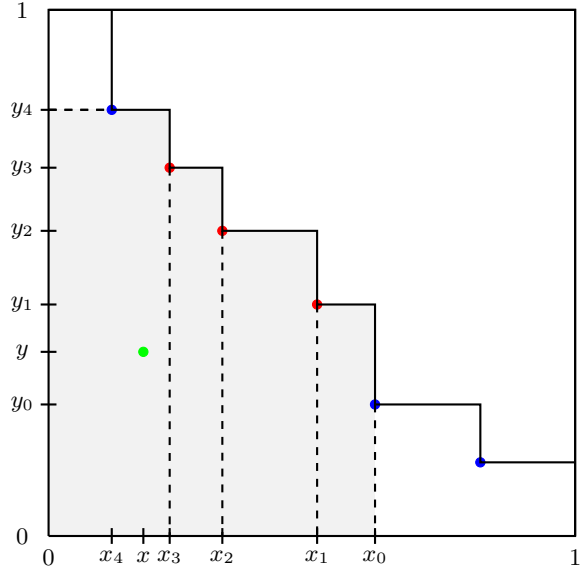
\begin{figure}[htb]
\begin{adjustbox}{max totalsize={.9\textwidth}{.7\textheight},center}
\begin{tikzpicture}[scale=7]

\begin{pgfonlayer}{background layer}

%
%

%
%
\fill[fill=gray!10]    (0,0) -- (.23,0.) -- (.23,.81) -- (0.,.81);
\fill[fill=gray!10]    (.23,0) -- (.33,0.) -- (.33,.70) -- (.23,.70);
\fill[fill=gray!10]    (.33,0) -- (.51,0.) -- (.51,.58) -- (.33,.58);
\fill[fill=gray!10]    (.51,0) -- (.62,0.) -- (.62,.44) -- (.51,.44);

%
%
\draw[thick,color=black](0,1.)--(1.,1.)--(1.,0.);

%
%
\filldraw [blue]
(.12,.81) circle (.25pt)
;
\filldraw[red]
(.23,.70) circle (.25pt)
(.33,.58) circle (.25pt)
(.51,.44) circle (.25pt)
;
\filldraw[blue]
(.62,.25) circle (.25pt)
(.82,.14) circle (.25pt)
;

%
%
\filldraw[green]
(.18,.35) circle (.25pt)
;

%
%
\draw[dashed,color=black, thick](0,.81)--(.12,.81);
\draw[dashed,color=black, thick](.23,.70)--(.23,0.);
\draw[dashed,color=black, thick](.33,.58)--(.33,0.);
\draw[dashed,color=black, thick](.51,.44)--(.51,0.);
\draw[dashed,color=black, thick](.62,.25)--(.62,0.);
\draw[dashed,color=black, thick](0,.81)--(.12,.81);

%
%
\draw[thick,color=black]
(.12,1.)--
(.12,.81)--(.23,.81) --
(.23,.70)--(.33,.70) --
(.33,.58)--(.51,.58) --
(.51,.44)-- (.62,.44) --
(.62,.25)-- (.82,.25) --
(.82,.14) -- (1.,.14)
;

%
%
\def\mmx{.015}
\def\mmy{.015}
\draw[color=black,thick] (.12,-\mmx)--(.12,+\mmx);
\draw [color=black,thick] (.18,-\mmx)--(.18,+\mmx);
\draw [color=black,thick] (.23,-\mmx)--(.23,+\mmx);
\draw [color=black,thick] (.33,-\mmx)--(.33,+\mmx);
\draw [color=black,thick] (.51,-\mmx)--(.51,+\mmx);
\draw [color=black,thick] (.62,-\mmx)--(.62,+\mmx);

\draw[color=black,thick] (-\mmy,.81)--(+\mmy,.81);
\draw [color=black,thick] (-\mmy,.70)--(+\mmy,.70);
\draw [color=black,thick] (-\mmy,.58)--(+\mmy,.58);
\draw [color=black,thick] (-\mmy,.44)--(+\mmy,.44);
\draw [color=black,thick] (-\mmy,.25)--(+\mmy,.25);
\draw [color=black,thick] (-\mmy,.35)--(+\mmy,.35);

%
%
\def\dx{0.}
\def\dy{.04}
\draw (0+\dx,-\dy) node[color=black] {\footnotesize $0$};
\draw (.12+\dx,-\dy) node[color=black] {\footnotesize $x_4$};
\draw (.18+\dx,-\dy) node[color=black] {\footnotesize $x$};
\draw (.23+\dx,-\dy) node[color=black] {\footnotesize $x_3$};
\draw (.33+\dx,-\dy) node[color=black] {\footnotesize $x_2$};
\draw (.51+\dx,-\dy) node[color=black] {\footnotesize $x_1$};
\draw (.62+\dx,-\dy) node[color=black] {\footnotesize $x_0$};
\draw (1.,-\dy) node[color=black] {\footnotesize $1$};

%
%
\def\gx{.05}
\def\gy{.04}
\draw (-\gx,0) node[color=black] {\footnotesize $0$};
\draw (-\gx,.35) node[color=black] {\footnotesize $y$};
\draw (-\gx,.25) node[color=black] {\footnotesize $y_0$};
\draw (-\gx,.44) node[color=black] {\footnotesize $y_1$};
\draw (-\gx,.58) node[color=black] {\footnotesize $y_2$};
\draw (-\gx,.70) node[color=black] {\footnotesize $y_3$};
\draw (-\gx,.81) node[color=black] {\footnotesize $y_4$};
\draw (-\gx,1.) node[color=black] {\footnotesize $1$};
\end{pgfonlayer}

\begin{pgfonlayer}{foreground layer}
\draw[thick,color=black] (0,0)--(0,1)--(1,1)--(1,0)--(0,0);
\end{pgfonlayer}

\end{tikzpicture}
\end{adjustbox}
\caption{In this example, after $n - 1$ observations, none of which fall in the shaded region~$S$, there are $r_n = 6$ remaining records.  The $n^{\rm \scriptsize th}$ observation, shown in green, breaks the $K_n = k = 3$ remaining records shown in red but not the $r_n - K_n = 3$ remaining records shown in blue.}
\label{F:figure}
\end{figure}

\begin{remark}
\label{R:k=0}
When $k = 0$, \refP{P:exact} is naturally and correctly interpreted as follows:
\smallskip

(i)~For $n \geq 3$ and $1 > x_0 > x > x_1 > 0$ and $0 < y_0 < y < y_1 < 1$ we have
\begin{align*}
\lefteqn{\hspace{-.7in}\P(K_n = 0;\,\XX \in \ddx\xx;\,\XX_0 \in \ddx\xx_0;\, \XX_1 \in \ddx\xx_1)} \\ 
&= (n - 1)^{\underline{2}} (1 - x_0 y_1)^{n - 3} \dd\xx \dd\xx_0 \dd\xx_1.
\end{align*}

{\rm (ii)} For $n \geq 2$ and $1 > x > x_1 > 0$ and $0 < y < y_1 < 1$
we have
\[
\P(K_n = 0;\,\XX \in \ddx\xx;\,\XX_0 = \ee_1;\,\XX_1 \in \ddx\xx_1)
= (n - 1) (1 - y_1)^{n - 2} \dd\xx \dd\xx_1.
\]

{\rm (iii)} For $n \geq 1$ and $1 > x > 0$ and $0 < y < 1$ we have
\[
\P(K_n = 0;\,\XX \in \ddx\xx;\,\XX_0 = \ee_1;\,\XX_1 = \ee_2) = {\bf 1}(n = 1) \dd\xx.
\]
\end{remark}
\medskip

To obtain an exact expression for $\P(K_n = k)$, one need only integrate out the variables $\xx, \xx_i$ in \refP{P:exact} to get
\begin{equation}
\label{abc}
\P(K_n = k) = A_k + 2 B_k + C_k,
\end{equation}
where $A_k$, $B_k$, and $C_k$ (all of which also depend on~$n$) correspond to parts (i), (ii), and~(iii) of the proposition, respectively.  
For small values of~$k$ this can be done explicitly, but for general~$k$ we take an inductive approach.  To get started on the induction, we first treat the case $k = 0$.

\subsection{The case $k = 0$}
\label{S:k=0}
Using \refR{R:k=0}, we obtain the following result.

\begin{proposition}
\label{P:k=0}
We have
\begin{align*}
A_0 &= {\bf 1}(n \geq 3) [\half n^{-1} H_n - \tfrac{3}{4} n^{-1}], \quad B_0 = {\bf 1}(n \geq 2) \half n^{-1}, \quad C_0 = {\bf 1}(n = 1),
\end{align*}
and therefore
\[
\P(K_n = 0) 
= 
\begin{cases}
\half n^{-1} H_n + \tfrac{1}{4} n^{-1} & \mbox{\rm if $n \geq 2$} \\
1 & \mbox{\rm if $n = 1$}.
\end{cases}
\]
\end{proposition}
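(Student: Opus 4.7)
The plan is to apply \refR{R:k=0} to each of the three cases and integrate, then substitute into~\eqref{abc}.

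For $A_0$ (case~(i), requiring $n \geq 3$), the density depends only on $x_0$ and $y_1$.  I would first integrate out the ``free'' variables:\ the inequalities $0 < x_1 < x < x_0$ contribute $\int_0^{x_0} x \dd x = x_0^2/2$, and symmetrically $0 < y_0 < y < y_1$ contribute $y_1^2/2$.  Thus
\[
A_0 = \frac{(n-1)(n-2)}{4} \int_0^1 \int_0^1 x_0^2 y_1^2 (1 - x_0 y_1)^{n-3} \dd x_0 \dd y_1.
\]
I would then evaluate the double integral by integration by parts twice in $y_1$ (first with $u = y_1^2$, then with $u = y_1$), each time using the antiderivative $\int (1 - x y)^m \dd y = -(1 - x y)^{m+1}/[(m+1) x]$.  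After multiplying by $x_0^2$ and integrating over $x_0 \in (0, 1)$, the three resulting one-dimensional integrals are the elementary Beta integrals $\int_0^1 x (1 - x)^{n-2} \dd x = 1/[n(n-1)]$ and $\int_0^1 (1 - x)^{n-1} \dd x = 1/n$, together with $\int_0^1 x^{-1}[1 - (1 - x)^n] \dd x = H_n$; the last is precisely the identity that appeared in the proof of \refP{P:denominator}.  Collecting terms yields $A_0 = H_n/(2 n) - 3/(4 n)$ for $n \geq 3$.

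The computations for $B_0$ and $C_0$ are much easier.  For $B_0$ (case~(ii), $n \geq 2$), the density depends only on $y_1$, and integrating out $x_1, x, y$ produces the factor $\frac{1}{2} y_1$; thus
\[
B_0 = \frac{n-1}{2} \int_0^1 y_1 (1 - y_1)^{n-2} \dd y_1 = \frac{n-1}{2} \cdot \frac{1}{n(n-1)} = \frac{1}{2 n}.
\]
For $C_0$ (case~(iii)) one reads off $C_0 = \mathbf{1}(n = 1)$ directly from the density.

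Finally I combine via $\P(K_n = 0) = A_0 + 2 B_0 + C_0$ and verify the three small-$n$ regimes:\ $n = 1$ gives $1$; $n = 2$ gives $2 \cdot (1/4) = 1/2$, which agrees with $H_2/(2 \cdot 2) + 1/(4 \cdot 2) = 3/8 + 1/8 = 1/2$; and $n \geq 3$ gives $H_n/(2 n) - 3/(4 n) + 1/n = H_n/(2 n) + 1/(4 n)$.  The main obstacle is the evaluation of the double integral for $A_0$, which, as sketched, reduces (via two integrations by parts) to the harmonic-number identity underlying \refP{P:denominator}; the other pieces are routine.
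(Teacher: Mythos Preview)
Your proposal is correct and follows essentially the same route as the paper: integrate out the free variables to reduce $A_0$ to a double integral in $x_0, y_1$, then use two integrations by parts and the identity $\int_0^1 x^{-1}[1-(1-x)^n]\,\dd x = H_n$ from \refP{P:denominator}; $B_0$ and $C_0$ are handled identically.  The only cosmetic difference is that the paper first substitutes $z = x_0 y_1$ before integrating by parts in~$z$, whereas you integrate by parts directly in $y_1$; both lead to the same three elementary one-dimensional integrals.
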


\begin{proof}
Using \refR{R:k=0}, we perform the computations in increasing order of difficulty.  First, it is clear that $C_0 = 0$ for $n \geq 2$.  Next, for $n \geq 2$ we have
\begin{align*}
B_0
&= \int_{\substack{1 > x > x_1 > 0, \\ 0 < y < y_1 < 1}}\,(n - 1) (1 - y_1)^{n - 2} \dd\xx \dd\xx_1 \\
&= \half (n - 1) \int_{y_1 = 0}^1 y_1 (1 - y_1)^{n - 2}  \dd y_1 = \half n^{-1}. 
\end{align*}

Finally, for $n \geq 3$ we have  
\begin{align*}
A_0
&= \int_{\substack{1 > x_0 > x > x_1 > 0, \\ 0 < y_0 < y < y_1 < 1}}\,(n - 1)^{\underline{2}} (1 - x_0\,y_1)^{n - 3} 
\dd\xx \dd\xx_0 \dd\xx_1 \\
&= \tfrac{1}{4} (n - 1)^{\underline{2}} 
\int_{x_0 = 0}^1 \int_{y_1 = 0}^1 x_0^2\,y_1^2 (1 - x_0\,y_1)^{n - 3} \dd y_1 \dd x_0 \\
&= \tfrac{1}{4} (n - 1)^{\underline{2}} 
\int_{x = 0}^1 x^{-1} \int_{z = 0}^x\!z^2 (1 - z)^{n - 3} \dd z \dd x \\
&= \half n^{-1} \int_{x = 0}^1\!x^{-1} [1 - (1 - x)^n] \dd x \\ 
&{} \qquad \qquad - \half \int_{x = 0}^1\!(1 - x)^{n - 1} \dd x - \tfrac{1}{4} (n - 1) \int_{x = 0}^1\!x (1 - x)^{n - 2} 
\dd x,
\end{align*}
the final equality after two integrations by part.  Using the computation in the proof of \refP{P:denominator} and the above computation of $B_0$, for $n \geq 3$ we therefore find
\begin{align*}
A_0 
&= \half \P(K_n \geq 0) - \half n^{-1} - \half B_0
= \half n^{-1} H_n - \half n^{-1} - \tfrac{1}{4} n^{-1} \\
&= \half n^{-1} H_n - \tfrac{3}{4} n^{-1}.
\end{align*}

Now just use~\eqref{abc} to establish the asserted expression for $\P(K_n = 0)$.
\end{proof}

\subsection{Simplifications}
\label{S:simplifications}

The expressions obtained from \refP{P:exact} for $A_k$, $B_k$, and $C_k$ for $k \geq 1$ are easily simplified by integrating out the four variables $x, x_{k + 1}, y_0, y$ that don't appear in the integrand (when they do appear as variables).  Here is the result.

\begin{lemma}
\label{L:simplifications}
Assume $k \geq 0$.  Let $A_k, B_k, C_k$ be defined as explained at~\eqref{abc}.
\smallskip  

{\rm (i)} For $n \geq k + 3$ we have
\begin{align*}
A_k
&=
\tfrac{1}{4} (n - 1)^{\underline{k + 2}} \\ 
&{} \hspace{-.2in}\times \int_{\substack{1 > x_0 > \cdots > x_k > 0, \\ 0 < y_1 < \cdots < y_{k + 1} < 1}} 
x_k^2 y_1^2
\left[1 - \left\{ \mbox{$\sum^k$} + x_k y_{k + 1} \right\} \right]^{n - (k + 3)}
\dd x_0 \dd\xx_1 \cdots \ddx\xx_k \dd y_{k + 1}.
\end{align*}
\smallskip

{\rm (ii)} For $n \geq k + 2$ we have
\begin{align*}
B_k
&=
\half (n - 1)^{\underline{k + 1}} \\
&{} \times \int_{\substack{1 > x_1 > \cdots > x_k > 0, \\ 0 < y_1 < \cdots < y_{k + 1} < 1}} 
x_k^2 y_1
\left[1 - \left\{ \mbox{$\sum^k$} +x_k y_{k + 1} \right\} \right]^{n - (k + 2)} 
\dd\xx_1 \cdots \ddx\xx_k \dd y_{k + 1},
\end{align*}
where here $x_0 = 1$ and if $k = 0$ then the integral is taken over $0 < y_1 < 1$.
\smallskip 

{\rm (iii)} For $n \geq k + 1$ we have
\begin{align*}
C_k
&= (n - 1)^{\underline{k}}
\int_{\substack{1 > x_1 > \cdots > x_k > 0, \\ 0 < y_1 < \cdots < y_k < 1}}
x_k y_1 
\left[1 - \left\{ \mbox{$\sum^k$} + x_k \right\} \right]^{n - (k + 1)}
\dd\xx_1 \cdots \ddx\xx_k
\end{align*}
where here $x_0 = 1$ and if $k = 0$ then the interpretation is $C_0 = {\bf 1}(n = 1)$.
\end{lemma}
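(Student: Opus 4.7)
The plan is to begin from the three distributional formulas of \refP{P:exact} and integrate out the auxiliary variables $x, x_{k+1}, y_0, y$ (those that are present in case~(i), a subset of them in case~(ii), and only $x, y$ in case~(iii)).  The crucial structural observation is that the exponentiated factor $[1 - \{\sum^k + x_k y_{k+1}\}]^{n - (k + 3)}$ (and its analogues in cases (ii) and (iii)) depends, via the definition $\sum^k = \sum_{i = 1}^k (x_{i - 1} - x_i) y_i$, only on $x_0, \dots, x_k$ and $y_1, \dots, y_{k + 1}$.  Hence the variables $x, x_{k + 1}, y_0, y$ appear only through the domain inequalities, so the integrations over them factor out.

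First I would handle case~(i).  Here the constraints $x_{k + 1} < x < x_k$ (with $x_{k + 1} > 0$) and $y_0 < y < y_1$ (with $y_0 > 0$) are decoupled pairs of nested inequalities, so the slice integrations are
\[
\int_{0 < x_{k + 1} < x < x_k}\!\dd x_{k + 1} \dd x = \tfrac{1}{2} x_k^2, \qquad \int_{0 < y_0 < y < y_1}\!\dd y_0 \dd y = \tfrac{1}{2} y_1^2,
\]
yielding the prefactor $\tfrac14 x_k^2 y_1^2$ asserted in the lemma.  In case~(ii) we have $x_0 = 1$ and $y_0$ is absent, so we integrate only $x_{k + 1}, x, y$; the first pair yields $\tfrac12 x_k^2$ exactly as before, while the $y$-integration reduces to $\int_0^{y_1} \dd y = y_1$, producing the prefactor $\tfrac12 x_k^2 y_1$.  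In case~(iii) we also have $x_{k+1}$ and $y_{k + 1}$ fixed (the first equal to $0$ by virtue of $\XX_{k + 1} = \ee_2$, the second equal to $1$), so only $x \in (0, x_k)$ and $y \in (0, y_1)$ remain, contributing the factor $x_k y_1$.  In each case the remaining integration is exactly the one displayed in the lemma.

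There is no real obstacle here; the only mild care needed is in the boundary cases $k = 0$, where one must verify that the above reductions are consistent with \refR{R:k=0}.  In particular, for $C_0$ one sees that the product $(n-1)^{\underline{0}} = 1$ combined with the empty product convention $x_0 = 1$ and the requirement $n \geq k + 1 = 1$ (with the exponent $n - 1 = 0$ forcing the bracket to equal $1$) gives only the degenerate contribution from $n = 1$, matching $C_0 = \ett{n = 1}$.  The edge cases for $B_k$ and $A_k$ at $k = 0$ similarly reduce to the formulas given in \refR{R:k=0}.
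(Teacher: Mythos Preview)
Your proposal is correct and follows exactly the approach the paper itself takes: the paper's ``proof'' is simply the sentence preceding the lemma, which says the result is obtained from \refP{P:exact} by integrating out the variables $x, x_{k+1}, y_0, y$ that do not appear in the integrand.  Your write-up merely makes explicit the two elementary area computations $\int_{0 < u < v < a} \dd u\,\dd v = \tfrac12 a^2$ and $\int_0^a \dd v = a$ that produce the prefactors $\tfrac14 x_k^2 y_1^2$, $\tfrac12 x_k^2 y_1$, and $x_k y_1$.

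One small comment on your $k=0$ discussion for $C_0$: the phrase ``the exponent $n-1=0$ forcing the bracket to equal $1$'' is a bit misleading as written.  What actually happens is that with $k=0$, $x_0=1$, and $y_{k+1}=1$ the bracket becomes $[1-1]^{\,n-1}=0^{\,n-1}$, which is $0$ for $n\ge 2$ and $1$ (via $0^0=1$) only when $n=1$; that is the mechanism giving $C_0=\ett{n=1}$.  In any event the lemma simply \emph{declares} this interpretation, so the point is cosmetic.
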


\begin{remark}
\label{R:alternative}
Alternative expressions involving only finite sums are available for $A_k, B_k, C_k$ by recasting the expressions in square brackets in 
Lemma \ref{L:simplifications}
as finite sums of nonnegative terms, expanding the integrand multinomially, and integrating the resulting polynomials explicitly.  When this is done, one finds that $A_k, B_k, C_k$ are all rational, as therefore are 
$\P(K_n = k)$ and $\P(K_n = k\,|\,K_n \geq 0)$.  

Take $C_k$ as an example.  We have
\[
1 - \left\{ \mbox{$\sum^k$} + x_k \right\} = \sum_{i = 1}^k (x_{i - 1} - x_i) (1 - y_i),
\]
and carrying out this procedure yields
\[
C_k = n^{-2} \sum \prod_{i = 1}^k \left( i + \sum_{\ell = k + 1 - i}^k j_{\ell} \right)^{-1},
\]   
where the indicated sum is taken over $k$-tuples $(j_1, \dots, j_k)$ of nonnegative integers summing to 
$n - (k + 1)$ and the natural interpretation for $k = 0$ is $C_0 = {\bf 1}(n = 1)$.  Examples include
\begin{align}
\nonumber
C_1 &= n^{-2} (n - 1)^{-1}, \quad n \geq 2; \\
\nonumber
C_2 &= n^{-2} (n - 1)^{-1} H_{n - 2}, \quad n \geq 3; \\
\label{lastC}
C_{n - 1} &= n^{-2} \prod_{i = 1}^{n - 2} i^{-1} = (n!\,n)^{-1}, \quad n \geq 1.
\end{align}
\end{remark}

Since our aim is to compute $\P(K_n = 0)$ up to additive error $O(n^{-2})$ for large~$n$, the following lemma will suffice to treat the contributions $C_k$.

\begin{lemma}
\label{L:C}
For $n \geq 1$, the probabilities $C_k \geq 0$ satisfy
\[
\sum_{k = 0}^{\infty} C_k = \sum_{k = 0}^{n - 1} C_k = n^{-2}. 
\]
\end{lemma}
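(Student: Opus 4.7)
The plan is to give a short probabilistic proof rather than attempting the combinatorial/integration route suggested by the explicit formula in \refR{R:alternative} (which would require, in particular, evaluating the sum in closed form).

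First, I will recognize that the quantity
\[
\sum_{k = 0}^{n - 1} C_k = \sum_{k = 0}^{n - 1}\P(K_n = k;\,\XX_0 = \ee_1;\,\XX_{k+1} = \ee_2)
\]
is the probability of the union (over $k$) of the case-(iii) events in \refP{P:exact}. I will then show that this union is almost surely equal to the event
\[
E_n := \bigcap_{i = 1}^{n - 1} \{\XX^{(n)} \prec \XX^{(i)}\},
\]
i.e., the event that the $n^{\rm \scriptsize th}$ observation is componentwise strictly smaller than every preceding observation. The forward implication uses that $\XX_0 = \ee_1$ forces every remaining record at time $n - 1$ to have second coordinate $> Y$, and $\XX_{k + 1} = \ee_2$ forces every remaining record to have first coordinate $> X$; hence $\XX \prec \XX^{(j)}$ for every current record $\XX^{(j)}$ at time $n - 1$, and since every non-current previous observation is itself dominated (in the record-small sense) by some current record, $\XX$ is componentwise smaller than every $\XX^{(i)}$ with $i < n$. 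Conversely, on $E_n$ all previous remaining records have both coordinates exceeding those of $\XX$, which by the definitions in \refS{S:exact} forces $\XX_0 = \ee_1$ and $\XX_{k + 1} = \ee_2$, while $K_n$ simply equals $r_{n - 1} \le n - 1$.

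Once this identification is made, the probability $\P(E_n)$ is evaluated directly by conditioning on $\XX = (X, Y)$ and using independence of the observations:
\[
\P(E_n) = \int_0^1\!\!\int_0^1 [(1 - x)(1 - y)]^{n - 1}\dd x \dd y = \left( \int_0^1 (1 - x)^{n - 1}\dd x \right)^{\!2} = n^{-2}.
\]
The first equality in the lemma, $\sum_{k = 0}^{\infty} C_k = \sum_{k = 0}^{n - 1} C_k$, holds trivially because $K_n \leq r_{n - 1} \leq n - 1$, so $C_k = 0$ for $k \geq n$.

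The only step that requires genuine care is the event-identification step above; the probability computation itself is routine. Everything else amounts to unwinding the case-(iii) definitions of $\XX_0$ and $\XX_{k + 1}$ and using the elementary observation that each previous non-remaining record is dominated by some remaining record.
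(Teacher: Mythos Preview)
Your proposal is correct and follows essentially the same approach as the paper's own proof: both identify the case-(iii) event $\{K_n = k,\ \XX_0 = \ee_1,\ \XX_{k+1} = \ee_2\}$ with $\{r_{n-1} = k,\ K_n = r_{n-1}\}$, sum over~$k$ to obtain the event that $\XX^{(n)}$ is componentwise the minimum of $\XX^{(1)},\dots,\XX^{(n)}$, and evaluate that probability as $n^{-2}$. Your write-up is somewhat more explicit in justifying the event identification (spelling out both directions and the domination-by-a-current-record argument) and in computing the final probability via the integral, whereas the paper simply asserts both steps in one line; but the substance is identical.
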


\begin{proof}
Recalling that $r_n$ denotes the number of remaining records at time~$n$, it is clear from the description of case~(iii) leading up to \refP{P:exact} that
\[
C_k = \P(r_{n - 1} = k,\,K_n = k) = \P(r_{n - 1} = k, K_n = r_{n - 1}).
\]
Therefore
\[
\sum_{k = 0}^{\infty} C_k = \P(K_n = r_{n - 1}) = \P(\XX \prec \XX^{(i)}\mbox{\ for all $1 \leq i \leq n - 1$})
= n^{-2}.~\qed
\]
\noqed
\end{proof}
 
\subsection{Recurrence relations}
\label{S:recurrence}

In this subsection we establish recurrence relations for $A_k$ and $B_k$ in the variable~$k$, holding~$n$ fixed and treating the probabilities $C_k$ as known.

\begin{lemma}
\label{L:recurrence}
For $k \geq 1$ we have
\begin{enumerate}
\item[(i)] $A_k = \half (A_{k - 1} - B_k)$ if $n \geq k + 3$, 
\item[(ii)] $B_k = \half (B_{k - 1} - C_k)$ if $n \geq k + 2$.
\end{enumerate}
\end{lemma}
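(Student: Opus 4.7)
The plan is to prove each recurrence by integrating out one carefully chosen variable in the integral representations of $A_k$ and $B_k$ given in \refL{L:simplifications}, so that the two boundary contributions produced by that single integration match, respectively, the two terms on the right-hand side.

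For (i), I would integrate $x_0$ out of $A_k$ over the interval $(x_1,1)$. The dependence of the integrand on $x_0$ is linear inside the $(n-(k+3))$-th power via the decomposition $\sum^k = (x_0-x_1)y_1 + \sum_2^k$, where $\sum_2^k := \sum_{i=2}^k (x_{i-1}-x_i)y_i$, so a single antiderivative drops the exponent to $n-(k+2)$ and produces a difference of two terms, evaluated at $x_0=1$ and $x_0=x_1$. The factor $n-(k+2)$ in the denominator absorbs into the prefactor, lowering $(n-1)^{\underline{k+2}}$ to $(n-1)^{\underline{k+1}}$, while the factor $y_1$ in the denominator reduces $y_1^2$ to $y_1$. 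The $x_0=1$ piece is precisely the integrand of $B_k$ (which carries the convention $x_0=1$), and after comparing prefactors it contributes exactly $-\tfrac12 B_k$. The $x_0=x_1$ piece depends on $y_1$ only through the leading factor $y_1$; integrating $y_1$ over $(0,y_2)$ yields $y_2^2/2$, and after the shift $(x_j,y_{j+1})\mapsto(\tilde x_{j-1},\tilde y_{j-1})$ (for $j=1,\ldots,k$, together with $y_{k+1}\mapsto\tilde y_k$) the remaining integral is exactly the integrand of $A_{k-1}$, contributing $\tfrac12 A_{k-1}$.

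For (ii), the analogous move is to integrate $y_{k+1}$ out of $B_k$ over $(y_k,1)$. The only $y_{k+1}$-dependence is inside the $(n-(k+2))$-th power $(1-\sum^k - x_k y_{k+1})^{n-(k+2)}$, so one integration again yields a difference, now with exponent $n-(k+1)$, evaluated at $y_{k+1}=y_k$ and $y_{k+1}=1$. The telescoping identity $\sum^k + x_k y_k = \sum^{k-1} + x_{k-1}y_k$ converts the lower-endpoint piece to the $B_{k-1}$ form in every factor except $x_k$, which appears only as a linear factor; integrating $x_k$ over $(0,x_{k-1})$ produces $x_{k-1}^2/2$, and the prefactors conspire to give exactly $\tfrac12 B_{k-1}$. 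At the upper endpoint $y_{k+1}=1$, the integrand is precisely that of $C_k$ as described in part~(iii) of \refL{L:simplifications}, and the prefactor matching gives $-\tfrac12 C_k$.

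The main obstacle is essentially bookkeeping: keeping track of the simplex constraints, the descent of the falling factorials $(n-1)^{\underline{k+2}}\to(n-1)^{\underline{k+1}}\to(n-1)^{\underline{k}}$, and the passage $\sum^k\to\sum_2^k$ (respectively $\sum^k\to\sum^{k-1}$) at the free endpoint. Once one notes the two algebraic identities $\sum^k|_{x_0=x_1}=\sum_2^k$ and $\sum^k+x_ky_k=\sum^{k-1}+x_{k-1}y_k$, each recurrence follows from a single elementary integration followed by a routine relabeling.
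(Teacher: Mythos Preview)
Your proposal is correct and matches the paper's proof essentially step for step: for~(i) integrate out $x_0$ in $A_k$ (the $x_0=x_1$ endpoint, after a further $y_1$-integration and relabeling, yields $\tfrac12 A_{k-1}$, while the $x_0=1$ endpoint yields $\tfrac12 B_k$), and for~(ii) integrate out $y_{k+1}$ in $B_k$ (the $y_{k+1}=y_k$ endpoint, after the telescoping identity and an $x_k$-integration, yields $\tfrac12 B_{k-1}$, while the $y_{k+1}=1$ endpoint yields $\tfrac12 C_k$). Your relabeling in~(i) carries a small index slip---you want $y_{j+1}\mapsto\tilde y_j$, not $\tilde y_{j-1}$---but this is purely cosmetic.
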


\begin{proof}
(i)~Begin with the expression for $A_k$ in \refL{L:simplifications} and integrate out the variable $x_0$.  This gives
\begin{align*}
A_k
&=
\tfrac{1}{4} (n - 1)^{\underline{k + 1}} \\ 
&{} \hspace{-.2in}\times 
\left( 
\int_{\substack{1 > x_1 > \cdots > x_k > 0, \\ 0 < y_1 < \cdots < y_{k + 1} < 1}} 
x_k^2 y_1
\left[1 - \left\{ \mbox{$\sum_2^k$} + x_k y_{k + 1} \right\} \right]^{n - (k + 2)}
\dd\xx_1 \cdots \ddx\xx_k \dd y_{k + 1} 
\right. \\
&{} \hspace{-.08in} - 
\left.
\int_{\substack{1 > x_1 > \cdots > x_k > 0, \\ 0 < y_1 < \cdots < y_{k + 1} < 1}} 
x_k^2 y_1
\left[1 - \left\{ \mbox{$\sum_1^k$} + x_k y_{k + 1} \right\} \right]^{n - (k + 2)}
\dd\xx_1 \cdots \ddx\xx_k \dd y_{k + 1}
\right) \\
&= A_k' - A_k''\mbox{\ (say)},
\end{align*}
with $x_0 = 1$ in the subtracted integral.  For $A_k'$, observe that the variable $y_1$ does not appear within the square brackets in the integrand.  Thus, integrating out $y_1$ and then shifting variable names, we find
\begin{align*}
A_k'
&=
\tfrac{1}{8} (n - 1)^{\underline{k + 1}} \\ 
&{} \hspace{-.2in}\times
\int_{\substack{1 > x_1 > \cdots > x_k > 0, \\ 0 < y_2 < \cdots < y_{k + 1} < 1}} 
x_k^2 y_2^2
\left[1 - \left\{ \mbox{$\sum_2^k$} + x_k y_{k + 1} \right\} \right]^{n - (k + 2)}
\dd x_1 \dd\xx_2 \cdots \ddx\xx_k \dd y_{k + 1} \\
&=
\tfrac{1}{8} (n - 1)^{\underline{k + 1}} 
\int_{\substack{1 > x_0 > \cdots > x_{k - 1} > 0, \\ 0 < y_1 < \cdots < y_k < 1}} 
x_{k - 1}^2 y_1^2 \\
&{} \qquad \qquad \qquad \times 
\left[1 - \left\{ \mbox{$\sum^{k - 1}$} + x_{k - 1} y_k \right\} \right]^{n - (k + 2)}
\dd x_0 \dd\xx_1 \cdots \ddx\xx_{k - 1} \dd y_k \\
&= \half A_{k - 1},
\end{align*}
where the last equality follows from \refL{L:simplifications}.  We see also from \refL{L:simplifications} that 
$A_k'' = \half B_k$.  This completes the proof of part~(i).

(ii)~The proof of part~(ii) is similar.  Begin with the expression for $B_k$ in \refL{L:simplifications} and integrate out the variable $y_{k + 1}$.  This gives (with $x_0 = 1$)
\begin{align*}
B_k
&=
\half (n - 1)^{\underline{k}}
\left(
\int_{\substack{1 > x_1 > \cdots > x_k > 0, \\ 0 < y_1 < \cdots < y_k < 1}} 
x_k y_1
\left[1 - \left\{ \mbox{$\sum^k$} +x_k y_k \right\} \right]^{n - (k + 1)} 
\dd\xx_1 \cdots \ddx\xx_k
\right. \\
&{} \qquad \qquad \quad -
\left.
\int_{\substack{1 > x_1 > \cdots > x_k > 0, \\ 0 < y_1 < \cdots < y_k < 1}} 
x_k y_1
\left[1 - \left\{ \mbox{$\sum^k$} +x_k \right\} \right]^{n - (k + 1)} 
\dd\xx_1 \cdots \ddx\xx_k
\right) \\
&= B_k' - B_k''\mbox{\ (say)}.
\end{align*}
For $B_k'$, observe that the expression within $\{\cdot\}$ equals $\sum^{k - 1} + x_{k - 1} y_k$, which doesn't depend on $x_k$.
Thus, integrating out $x_k$, we find
\begin{align*}
B_k'
&=
\tfrac{1}{4} (n - 1)^{\underline{k}}
\int_{\substack{1 > x_1 > \cdots > x_{k - 1} > 0, \\ 0 < y_1 < \cdots < y_k < 1}} 
x_{k - 1}^2 y_1 \\
&{} \qquad \qquad \qquad \quad \times
\left[1 - \left\{ \mbox{$\sum^{k - 1}$} +x_{k - 1} y_k \right\} \right]^{n - (k + 1)} 
\dd\xx_1 \cdots \ddx\xx_{k - 1} \dd y_k \\
&= \half B_{k - 1},
\end{align*}
where the last equality follows from \refL{L:simplifications}.  We see also from \refL{L:simplifications} that 
$B_k'' = \half C_k$.  This completes the proof of part~(ii).
\end{proof}

The recurrence relations of \refL{L:recurrence} are trivial to solve in terms of the probabilities $C_k$ and the ``initial conditions'' delivered by \refP{P:k=0}.

\begin{lemma}
\label{L:solution}
For $n \geq 1$ and $k \geq 0$ we have
\begin{align}
\label{asolution}
A_k
&= {\bf 1}(n \geq k + 3) \\ 
&{} \qquad \times \left[ 2^{-k} A_0 - k 2^{-(k + 1)} B_0 + \sum_{j = 1}^k (k + 1- j) 2^{- (k + 2 - j)} C_j \right], \nonumber \\
\label{bsolution}
B_k &= {\bf 1}(n \geq k + 2) \left[ 2^{-k} B_0 - \sum_{j = 1}^k 2^{- (k + 1 - j)} C_j \right].
\end{align}
\end{lemma}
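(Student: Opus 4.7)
The plan is to prove both identities by induction on $k$ (at fixed $n$), first establishing the formula for $B_k$ and then feeding it into the recurrence for $A_k$. The indicator constraints $\mathbf{1}(n \geq k + 2)$ and $\mathbf{1}(n \geq k + 3)$ will be handled by separating the cases where the recurrences of \refL{L:recurrence} apply from the complementary range, in which both sides vanish: the integral representations in \refL{L:simplifications} require at least $k + 2$ distinct observations for $B_k$ (the new point plus $\XX_1, \dots, \XX_{k+1}$) and at least $k + 3$ for $A_k$ (also $\XX_0$), so $B_k = 0$ when $n < k + 2$ and $A_k = 0$ when $n < k + 3$, matching the indicators on the right-hand sides.

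For the $B_k$ induction, the base case $k = 0$ reduces to the identity $B_0 = \mathbf{1}(n \geq 2) B_0$, which is \refP{P:k=0}. For the inductive step with $n \geq k + 2$, I would apply \refL{L:recurrence}(ii) to write $B_k = \tfrac12(B_{k - 1} - C_k)$, then substitute the inductive hypothesis for $B_{k-1}$ (whose indicator $n \geq k + 1$ is satisfied). This yields
\[
B_k = 2^{-k} B_0 - \sum_{j = 1}^{k - 1} 2^{-(k + 1 - j)} C_j - \tfrac12 C_k,
\]
and since $\tfrac12 = 2^{-(k + 1 - k)}$, the last term merges into the sum to give~\eqref{bsolution}.

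For the $A_k$ induction the base case is again trivial. For the inductive step with $n \geq k + 3$, \refL{L:recurrence}(i) gives $A_k = \tfrac12(A_{k - 1} - B_k)$, and both indicators $n \geq k + 2$ (for $A_{k-1}$) and $n \geq k + 2$ (for $B_k$) are automatically satisfied. Substituting the inductive formula for $A_{k - 1}$ and the already-proved formula for $B_k$ leaves a bookkeeping task: the coefficient of $A_0$ becomes $2^{-k}$; the coefficient of $B_0$ becomes $-(k - 1) 2^{-(k + 1)} - 2^{-(k + 1)} = -k \cdot 2^{-(k + 1)}$; for each $1 \leq j \leq k - 1$ the coefficient of $C_j$ assembles as $(k - j) 2^{-(k + 2 - j)} + 2^{-(k + 2 - j)} = (k + 1 - j) 2^{-(k + 2 - j)}$; and the $j = k$ contribution $\tfrac{1}{4} C_k$ coincides with $(k + 1 - k) 2^{-(k + 2 - k)} C_k$, matching the same pattern.

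No step is genuinely delicate; the proof is essentially a telescoping solution of two first-order linear recurrences with constant factor $\tfrac12$. The only mild care needed is to verify that the domain constraints $n \geq k + 2$ and $n \geq k + 3$ nest appropriately so the inductive hypothesis is always applicable in the relevant range, and that both sides of each formula vanish on the complementary range. The resulting expressions~\eqref{asolution} and~\eqref{bsolution} are precisely what one obtains from iterating the recurrences into explicit geometric sums.
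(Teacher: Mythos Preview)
Your proposal is correct and follows essentially the same approach as the paper: both solve the two first-order linear recurrences of \refL{L:recurrence} explicitly. The only cosmetic difference is that the paper first unwinds $A_k = \tfrac12(A_{k-1} - B_k)$ into $A_k = 2^{-k}A_0 - \sum_{j=1}^k 2^{-(k+1-j)} B_j$ and then substitutes the already-derived formula for $B_j$, whereas you induct directly on the closed form~\eqref{asolution}; the algebra is identical.
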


\begin{proof}
Clearly we have~\eqref{bsolution} and likewise
\begin{equation}
\label{ab equation}
A_k = 2^{-k} A_0 - \sum_{j = 1}^k 2^{- (k + 1 - j)} B_j.
\end{equation}
Then plugging~\eqref{bsolution} into~\eqref{ab equation} and rearranging yields~\eqref{asolution}. 
\end{proof}

\subsection{Approximation to the probability $\P(K_n = k)$, with error bound.}
\label{S:large n}

\begin{theorem}
\label{T:numerator}
For $n \geq 1$ and every $k \geq 0$ we have
\begin{equation}
\label{remainder}
\left| \P(K_n = k) - \Big[ 2^{- (k + 1)} n^{-1} H_n - (k - 1) 2^{- (k + 2)} n^{-1} \Big] \right|
\leq \tfrac{1}{2} n^{-2}.
\end{equation}
\end{theorem}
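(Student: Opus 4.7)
In the main regime $n \geq k + 3$, my plan is to start from the decomposition $\P(K_n = k) = A_k + 2 B_k + C_k$ of~\eqref{abc} and substitute the explicit closed forms of \refL{L:solution}. Collecting coefficients yields
\[
\P(K_n = k) = 2^{-k} A_0 + (4 - k) 2^{-(k+1)} B_0 + \sum_{j=1}^{k} (k - j - 3) 2^{-(k+2-j)} C_j + C_k,
\]
after using the identity $-2 \cdot 2^{-(k+1-j)} = -4 \cdot 2^{-(k+2-j)}$ to combine $C_j$ contributions. Substituting $A_0 = \tfrac{1}{2} n^{-1} H_n - \tfrac{3}{4} n^{-1}$ and $B_0 = \tfrac{1}{2} n^{-1}$ from \refP{P:k=0}, the $A_0, B_0$ contributions consolidate to the proposed main expression $2^{-(k+1)} n^{-1} H_n - (k-1) 2^{-(k+2)} n^{-1}$: the $n^{-1} H_n$ coefficient is $2^{-k} \cdot \tfrac{1}{2}$, while the $n^{-1}$ coefficient combines to $-\tfrac{3}{4} \cdot 2^{-k} + (4-k) 2^{-(k+1)} \cdot \tfrac{1}{2} = (1-k) 2^{-(k+2)}$.

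Next I would bound the residual. Reindexing via $m = k - j$ and merging the $m = 0$ term (which contributes $-\tfrac{3}{4} C_k$) with the standalone $+ C_k$, the residual becomes
\[
\tfrac{1}{4} C_k + \sum_{m = 1}^{k-1} (m - 3) 2^{-(m+2)} C_{k-m}.
\]
A term-by-term check shows that each coefficient has absolute value at most $\tfrac{1}{4}$, attained at $m = 0$ (on $C_k$) and $m = 1$ (on $C_{k-1}$); the subsequent values $-\tfrac{1}{16}, 0, \tfrac{1}{64}, \tfrac{1}{64}, \ldots$ are all strictly smaller. Thus the residual is bounded in absolute value by $\tfrac{1}{4} \sum_{j=1}^{k} C_j$, and \refL{L:C} gives $\tfrac{1}{4} \sum_{j=0}^{n-1} C_j = \tfrac{1}{4} n^{-2}$, comfortably inside the claimed $\tfrac{1}{2} n^{-2}$.

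The principal obstacle I anticipate is the boundary regime $n \leq k + 2$, where the indicators in \refL{L:solution} kill $A_k$ (when $n \leq k + 2$) or both $A_k$ and $B_k$ (when $n \leq k + 1$), while the formal expressions used above remain nonzero. I would handle these by direct verification. For $n \leq k$ the trivial identity $\P(K_n = k) = 0$ (since $K_n \leq n - 1$ almost surely) reduces the task to bounding $|2^{-(k+1)} n^{-1} H_n - (k-1) 2^{-(k+2)} n^{-1}|$, which decays geometrically in $k$ and is easily bounded by $\tfrac{1}{2} n^{-2}$ via a handful of small-$(n, k)$ checks together with the crude estimate $H_n \leq n$. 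For $n \in \{k+1, k+2\}$ one computes $\P(K_n = k)$ directly from the applicable pieces of \refL{L:simplifications} and compares to the main expression. The factor-of-two slack between the $\tfrac{1}{4}$ achieved in the main regime and the claimed $\tfrac{1}{2}$ leaves ample room to absorb these boundary corrections.
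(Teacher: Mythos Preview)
Your proposal is correct and follows essentially the same route as the paper: substitute \refL{L:solution} into the decomposition~\eqref{abc}, plug in the values of $A_0$ and $B_0$ from \refP{P:k=0} to produce the main term, observe that the residual $C_j$-coefficients $c_m = (m-3)2^{-(m+2)}$ (with $c_0 = \tfrac14$) are all bounded by $\tfrac14$ in absolute value so that \refL{L:C} gives $\tfrac14 n^{-2}$ in the main regime, and then verify the boundary cases $k = n-2$, $k = n-1$, and $k \geq n$ directly. The paper's treatment of those boundary cases is no more detailed than yours (``a simple argument omitted here\ldots''), so your sketch is at the same level of rigor.
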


\begin{proof}
Recall from~\eqref{abc} that $\P(K_n = k) = A_k + 2 B_k + C_k$; substitute for $A_k$ and $B_k$ using \refL{L:solution}; then substitute for $A_0$ and $B_0$ using \refP{P:k=0}; and finally rearrange.  

For $0 \leq k \leq n - 3$ this gives 
\begin{align*}
\P(K_n = k) 
&= 2^{-k} A_0 - (k - 4) 2^{-(k + 1)} B_0 + \sum_{j = 1}^{k - 1} (k - 3- j) 2^{- (k + 2 - j)} C_j + \tfrac{1}{4} C_k \\
&= 2^{- (k + 1)} n^{-1} H_n - (k - 1) 2^{- (k + 2)} n^{-1} \\ 
&{} \qquad \qquad + \sum_{j = 1}^{k - 1} (k - 3- j) 2^{- (k + 2 - j)} C_j + \tfrac{1}{4} C_k.
\end{align*}
Denote the coefficient of $C_j$ (with $1 \leq j \leq k$) by $c_{k, j}$.  Note that $c_{k, j} \equiv c_{k - j}$ depends only on $k - j \geq 0$, and that $|c_i| \leq 1/4$ (with equality for $c_0 = 1/4$ and $c_1 = -1/4$).  So \refL{L:C} gives the bound on the remainder term (with half as big a constant).

For $k = n - 2$ this gives
\[
\P(K_n = k) = 2^{-k} n^{-1} - \sum_{j = 1}^{k - 1} 2^{- (k - j)} C_j.
\] 
A simple argument omitted here shows that this differs from the approximation in the statement of the theorem by at most $\frac{1}{2} n^{-2}$ for all $n \geq 1$. 

For $k = n - 1$ 
\ignore{
we have $0 \leq \P(K_n = n - 1) \leq \P(r_{n - 1} = n - 1) = 1 / (n - 1)!$, because it is well known that (i)~$r_n$ in dimension~$d$ has the same distribution as $R_n$ in dimension $d - 1$ (the argument for this uses concomitants) and (ii)~$R_n$ in dimension~$1$ is a Poisson-binomial sum, with distribution the convolution of Bernoulli distributions with success probabilities $1 / i$ for $i = 1, \dots, n$.  
}
this together with~\eqref{lastC} gives
\[
\P(K_n = k) = C_{n - 1} = (n!\,n)^{-1}. 
\]
Now another simple and omitted argument shows that this differs from the approximation in the statement of the theorem by at most $\frac{1}{4} n^{-2}$ for all $n \geq 1$.

For $k \geq n$ we have $\P(K_n = k) = 0$, and another simple argument shows that this differs from the asserted approximation by at most $\frac{1}{2} n^{-2}$ provided $n \geq 6$, the worst case being $k = 7$ for $n = 6$ and $k = n$ for $n \geq 7$.  Further, the bound can be checked directly for $n = 1, 2, 3, 4, 5$, the worst~$k$ in each of those cases again being $k = n$.
\end{proof}

\begin{example}
The matrix $C = C_{n, k}$ with $1 \leq n \leq 5$ and $0 \leq k \leq 4$ is
\[
\left[
\begin{array}{ccccc}
1 & {} & {} & {} & {} \vspace{.03in} \\
0 & \tfrac{1}{4} & {} & {} & {} \vspace{.03in} \\
0 & \tfrac{1}{18} & \tfrac{1}{18} & {} & {} \vspace{.03in} \\
0 & \tfrac{1}{48} & \tfrac{1}{32} & \tfrac{1}{96} & {} \vspace{.03in} \\
0 & \tfrac{1}{100} & \tfrac{11}{600} & \tfrac{1}{100} & \tfrac{1}{600}
\end{array}
\right].
\]
Observe that the $n^{\rm \scriptsize th}$ row sums to $n^{-2}$, as noted at \refL{L:C}. 
The matrix with entries $\P(K_n = k)$ for the same values of~$n$ and~$k$ is
\begin{equation}
\label{pnk}
\left[
\begin{array}{ccccc}
1 & {} & {} & {} & {} \vspace{.03in} \\
\tfrac{1}{2} & \tfrac{1}{4} & {} & {} & {} \vspace{.03in} \\
\tfrac{7}{18} & \tfrac{1}{6} & \tfrac{1}{18} & {} & {} \vspace{.03in} \\
\tfrac{31}{96} & \tfrac{13}{96} & \tfrac{5}{96} & \tfrac{1}{96} & {} \vspace{.03in} \\
\tfrac{167}{600} & \tfrac{7}{60} & \tfrac{7}{150} & \tfrac{1}{75} & \tfrac{1}{600}
\end{array}
\right].
\end{equation}
Observe that the $n^{\rm \scriptsize th}$ row sums to $n^{-1} H_n$, as guaranteed by \refP{P:denominator}.
The matrix with entries $\P(K_n = k\,|\,K_n \geq 0)$ is therefore
\[
\left[
\begin{array}{ccccc}
1 & {} & {} & {} & {} \vspace{.03in} \\
\tfrac{2}{3} & \tfrac{1}{3} & {} & {} & {} \vspace{.03in} \\
\tfrac{7}{11} & \tfrac{3}{11} & \tfrac{1}{11} & {} & {} \vspace{.03in} \\
\tfrac{31}{50} & \tfrac{13}{50} & \tfrac{5}{50} & \tfrac{1}{50} & {} \vspace{.03in} \\
\tfrac{167}{274} & \tfrac{35}{137} & \tfrac{14}{137} & \tfrac{4}{137} & \tfrac{1}{274}
\end{array}
\right],
\] 
with every row summing to unity.
\end{example}

\begin{remark}
\label{R:rate}
(a)~Not that the optimal numerical constant appearing on the right in~\eqref{remainder} is important to know, but it would appear from~\eqref{pnk} and other computations that the optimal constant is $1/4$, achieved in four cases: $n = 1, 2$ with $k = n - 1, n$.

(b)~More importantly, we do not know whether the order $n^{-2}$ of the error bound in \refT{T:numerator} is asymptotically optimal.  While the approximation is \emph{perfect} for $k = 0$ if $n \geq 2$, for $k = 1$ it underestimates $\P(K_n = k)$ by $\frac{1}{4} C_1 = \frac{1}{4} n^{-2} (n - 1)^{-1}$ if $n \geq 2$, and for $k = 2$ it underestimates by $\frac{1}{4} (C_2 - C_1) = \frac{1}{4} n^{-2} (n - 1)^{-1} (H_{n - 2} - 1)$ if $n \geq 3$.
Thus the rate of convergence is $O(n^{-2})$ but $\Omega(n^{-3} \log n)$. 

For fixed $k \geq 1$, we conjecture that the correct rate of convergence is $\Theta(n^{-3} (\log n)^{k - 1})$, and more strongly that the error satisfies
\[
\Big[ 2^{- (k + 1)} n^{-1} H_n - (k - 1) 2^{- (k + 2)} n^{-1} \Big] - \P(K_n = k) 
\sim - \tfrac{1}{4} C_k \sim n^{-3} \frac{(\L n)^{k - 1}}{(k - 1)!}  
\] 
as $n \to \infty$.  Since
\[
\sup_{k \geq 1} \frac{(\L n)^{k - 1}}{(k - 1)!} = \Theta\left( \frac{n}{\sqrt{\log n}} \right),
\]
this suggests that perhaps the optimal rate (uniformly in~$k$) for \refT{T:numerator} is the small improvement 
$\Theta(n^{-2} (\log n)^{-1/2})$.
\end{remark}

\section{Conjectures}
\label{S:more}

The upshot of this section is that a variance bound would imply a Glivenko--Cantelli type theorem: \refConj{conj:variance0} would imply \refConj{conj:GC}.

\subsection{The natural conjecture}
\label{S:natural}

While our main \refT{T:main} does begin to explain how the Geometric$(1/2)$ distribution arises in connection with the breaking of bivariate records, it is not the conjecture to which one is led by performing many independent trials of generating a large number~$M$ of records and, for each trial, watching the table such 
as \refTab{Table1} evolve as records are generated one at at a time.  A natural conjecture concerns the fractions of records that break~$k$ remaining records, for various values of~$k$.  Accordingly, let 
\[
\tp_{M, k} := M^{-1} \sum_{m = 1}^M \tI_{m, k}
\]
where
\[
\tI_{m, k} := {\bf 1}(\mbox{$m^{\mathrm{th}}$ record generated breaks precisely~$k$ remaining records}).
\]
A strong conjecture one might form is the following, of Glivenko--Cantelli type:

\begin{conj}
\label{conj:GC}
The fractions~$\tp_{M, k}$ of the first~$M$ records that break precisely~$k$ remaining records satisfy
\[
\sup_{k \geq 0} \left| \tp_{M, k} - 2^{ - (k + 1)} \right| \asto 0 \mbox{\rm \ as $M \to \infty$}.
\]
\end{conj}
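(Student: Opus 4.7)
The plan is to reduce the uniform-in-$k$ claim to pointwise almost-sure convergence (for each fixed $k$) via Scheffé's lemma, and then to prove the pointwise statement as an $L^{2}$ law of large numbers along the time-index scale, using \refT{T:main} for the asymptotic mean and \refConj{conj:variance0} for concentration.

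For the reduction step, note that $\sum_{k \geq 0} \tp_{M,k} = 1$ almost surely (every new record breaks some nonnegative number of remaining records) and $\sum_{k \geq 0} 2^{-(k+1)} = 1$, so both sequences are pmfs on $\bbZgeo$. If we have shown that $\tp_{M,k} \asto 2^{-(k+1)}$ for each fixed $k$, then a countable intersection of the corresponding full-probability events produces a single event of probability one on which $\tp_{M,k}(\omega) \to 2^{-(k+1)}$ simultaneously for every $k$.  Scheffé's lemma, applied on that event (valid because the two pmfs have matching total mass), then yields
\[
\sum_{k \geq 0} \bigabs{\tp_{M,k}(\omega) - 2^{-(k+1)}} \to 0,
\]
from which the uniform statement of \refConj{conj:GC} is immediate.

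For the pointwise step it is convenient to work on the time-index scale rather than the record-index scale. Set $S_{N}^{(k)} := \sum_{n=1}^{N} \ett{K_{n} = k}$ and $T_{N} := \sum_{n=1}^{N} \ett{K_{n} \geq 0}$; writing $\tau_{M}$ for the arrival time of the $M^{\rm th}$ record, we have the exact identity $\tp_{M,k} = S_{\tau_{M}}^{(k)}/T_{\tau_{M}}$, with $T_{\tau_{M}} = M$. By \refP{P:denominator} and the refinement~\eqref{improvement} of \refT{T:main}, $\P(K_{n} = k) = 2^{-(k+1)}\,\P(K_{n} \geq 0) + O(n^{-1})$; summing over $n \leq N$ gives $\E S_{N}^{(k)} \sim 2^{-(k+1)}\,\E T_{N}$. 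Given a variance bound of the form promised by \refConj{conj:variance0}, Chebyshev then yields $S_{N}^{(k)}/\E S_{N}^{(k)} \to 1$ and $T_{N}/\E T_{N} \to 1$ in probability, hence $S_{N}^{(k)}/T_{N} \to 2^{-(k+1)}$ in probability. Upgrading to almost-sure convergence is standard: apply Borel--Cantelli along a sparse subsequence $(N_{j})$ chosen so that $\sum_{j} \Var(S_{N_{j}}^{(k)})/(\E S_{N_{j}}^{(k)})^{2} < \infty$, and fill in the gaps $N_{j} \le N \le N_{j+1}$ using monotonicity of $S^{(k)}_{N}$ and $T_{N}$ in $N$. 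Composing with $\tau_{M} \asto \infty$ then gives $\tp_{M,k} \asto 2^{-(k+1)}$, as required.

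The main obstacle, and the reason \refConj{conj:GC} is not yet a theorem in \refS{S:more}, is establishing the variance bound \refConj{conj:variance0} itself. The summands $\ett{K_{n} = k}$ are strongly coupled through the evolving geometry of remaining records, so controlling covariances $\Cov(\ett{K_{n} = k}, \ett{K_{n'} = k})$ for $n \ll n'$ requires quantifying how rapidly the current-record configuration forgets its past. A plausible route is to exhibit a near-regenerative structure on geometrically spaced scales, exploiting the fact (cited in \refS{S:intro}) that $r_{n} = O(\log n)$ almost surely, so that over sufficiently long blocks the existing current records get displaced with high probability. Turning this heuristic into a quantitative covariance bound for the correlated sum $S_{N}^{(k)}$ is, I expect, the technical core of any proof of \refConj{conj:GC}; once it is in hand, the upgrades sketched above (in-probability to almost sure, pointwise to uniform) are largely routine.
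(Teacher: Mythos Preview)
Your proposal is correct as a conditional reduction and follows essentially the same route the paper lays out in \refS{S:more}: pass from record-time to observations-time, use the mean asymptotics from \refT{T:numerator} together with a variance bound and Borel--Cantelli along a sparse subsequence (with monotonicity to interpolate) to get pointwise a.s.\ convergence, then upgrade to uniformity in~$k$; and you correctly identify \refConj{conj:variance0} as the open ingredient. The only noteworthy difference is that for the uniformity step you invoke Scheff\'e's lemma, whereas the paper (\refSS{S:uniformity}) gives the direct tail estimate $\sup_{k>K}\epsilon_{M,k}\le 1-\sum_{k\le K}\tp_{M,k}+2^{-(K+1)}$; both are routine and equivalent here. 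One small point to tighten: \refConj{conj:variance0} bounds $\Var R^{(0)}_{n,k}$, not $\Var S_N^{(k)}=\Var R_{N,k}$, so strictly you need either \refConj{conj:variance} or the bridging argument of \refSS{S:variance} that controls the contributions from observations setting a coordinate record.
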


In the remaining subsections we show how proving this conjecture can be reduced to an asymptotic variance calculation, and we leave that calculation for future research.

\subsection{Uniformity in~$k$}
\label{S:uniformity}

Of course, \refConj{conj:GC} would have the following corollary, of strong law of large numbers type.

\begin{conj}
\label{conj:SLLN}
For each fixed~$k \geq 0$, the fraction~$\tp_{M, k}$ of the first~$M$ records that breaks precisely~$k$ remaining records satisfies
\[
\tp_{M, k} \asto 2^{ - (k + 1)}\mbox{\rm \ as $M \to \infty$}.
\]
\end{conj}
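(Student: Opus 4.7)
My plan is to pass from the record clock to the observation clock and reduce almost-sure convergence to a second-moment estimate. Set $N_k(n) := \sum_{j=1}^n \ett{K_j = k}$ and $T_M := \min\{n : R_n = M\}$, so that $M\,\tp_{M,k} = N_k(T_M)$ and $R_{T_M} = M$. Since $R_n \asto \infty$, \refConj{conj:SLLN} is equivalent to
\[
\frac{N_k(n)}{R_n} \asto 2^{-(k+1)} \qquad \text{as }\ntoo.
\]
By \refP{P:denominator}, $\E R_n = \sum_{j=1}^n j\qw H_j \sim \tfrac12(\log n)^2$. Summing the uniform approximation of \refT{T:numerator} yields $\E N_k(n) = 2^{-(k+1)}\,\E R_n + O(\log n)$, so the ratio of means already converges to $2^{-(k+1)}$ with relative error $O(1/\log n)$.

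Concentration of $R_n$ about its mean at the order $\Var R_n = O((\log n)^2)$ is known (Bai et al.), and combined with Chebyshev, Borel--Cantelli along a geometric subsequence, and the monotonicity of $R_n$, this gives $R_n/\E R_n \asto 1$. Thus everything reduces to
\[
\frac{N_k(n)}{\E R_n} \asto 2^{-(k+1)},
\]
which, given the mean estimate, would follow from any variance bound of the form
\[
\Var N_k(n) = o\bigpar{(\log n)^4} \qquad (\ntoo).
\]
Granted such a bound, Chebyshev plus Borel--Cantelli along a sufficiently sparse deterministic subsequence $n_r$ (say $n_r = \floor{e^{r^2}}$) delivers a.s.\ convergence along $\{n_r\}$, and the monotonicity of $N_k(\cdot)$ together with the slow variation of $\E R_n$ then interpolates between subsequence indices.

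The hard part will be the variance bound itself, which is the essence of the variance conjecture referred to in the paper. The difficulty is that the events $\{K_i = k\}$ and $\{K_j = k\}$ for $i < j$ are strongly dependent through the evolving configuration of remaining records: the positions of the current records at time $i-1$ constrain the killing geometry for every subsequent observation. A plausible attack is to condition on the $\sigma$-algebra $\cF_i$ generated by the first $i$ observations, express $\P(K_j = k \mid \cF_i)$ by a formula analogous to \refP{P:exact} but with the time-$i$ staircase of remaining records playing the role of the initial empty configuration, and then bound covariances by comparing this conditional probability to the unconditional one. Quantitative control on how quickly the local geometry seen by a new observation equilibrates to an approximately stationary regime is where the main combinatorial work lies, and is very likely the content of the variance conjecture alluded to earlier.
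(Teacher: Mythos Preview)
Your proposal is not a proof but a reduction, and in that respect it mirrors exactly what the paper does: this statement is stated in the paper as a \emph{conjecture}, not a theorem. The paper does not prove it; rather, in Sections~4.3--4.5 it carries out precisely the steps you outline---the time change from the record clock to the observation clock (your $N_k(n)$ is the paper's $R_{n,k}$ and your $T_M$ is the paper's $T_m$), the expectation calculation via \refT{T:numerator}, the known concentration of $R_n$ from Bai et al.---and then reduces everything to a variance bound on $R_{n,k}$, recorded as \refConj{conj:variance} (namely $\Var R_{n,k} = O((\log n)^2)$) and left open. Your acknowledgment that ``the hard part will be the variance bound itself, which is the essence of the variance conjecture referred to in the paper'' is exactly right: neither you nor the paper closes this gap.

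One small technical slip: you claim that any bound of the form $\Var N_k(n) = o((\log n)^4)$ would suffice, but as stated this is not enough for Borel--Cantelli. Along $n_r = \lfloor e^{r^2}\rfloor$ one has $(\E R_{n_r})^2 \sim \tfrac14 r^8$, so Chebyshev yields probabilities that are merely $o(1)$, not summable in~$r$. You need a quantitative bound such as $\Var N_k(n) = O((\log n)^{4-\delta})$ for some $\delta>0$; the paper asks for the stronger $O((\log n)^2)$, which (combined with monotonicity and a \emph{geometric} subsequence, as in the proof of~\eqref{LD}) reproduces the argument already used for $R_n$ itself.
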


But it is standard to check that \refConj{conj:SLLN} also implies \refConj{conj:GC}.  For completeness, here is a proof, with all claims holding almost surely.  Let $\epsilon_{M, k} \geq 0$ denote the random variable 
$|\tp_{M, k} - 2^{ - (k + 1)}|$.  Then for any $K \geq 0$ we have
\[
\epsilon_M := \sup_{k \geq 0} \epsilon_{M, k} 
= \max\left\{ \max_{k \leq K} \epsilon_{M, k},\ \sup_{k > K} \epsilon_{M, k} \right\}
= \sup_{k > K} \epsilon_{M, k}
\]     
by \refConj{conj:SLLN}.  But
\[
\sup_{k > K} \epsilon_{M, k} \leq \sum_{k > K} \tp_{M, k} + 2^{- (K + 1)} 
= 1 - \sum_{k \leq K} \tp_{M, k} + 2^{- (K + 1)}.
\]  
Therefore
\begin{align*}
\limsup_{M \to \infty} \epsilon_M 
&\leq 1 - \sum_{k \leq K} \lim_{M \to \infty} \tp_{M, k} + 2^{- (K + 1)} \\
&= 1 - \sum_{k \leq K} 2^{ - (k + 1)} + 2^{- (K + 1)}
= 2^{-K}.
\end{align*}
Letting $K \to \infty$ completes the proof.~\qed

\subsection{Time change}
\label{S:time}

We show next that \refConj{conj:SLLN} would follow from the following ``observations-time'' conjecture.  Let
\begin{equation}
\label{Rnkdef}
R_{n, k} := \sum_{i = 1}^n I_{i, k}
\end{equation}
where
\[
I_{i, k} := {\bf 1}(K_i = k).
\]
Note that
\[
R_n = \sum_{k \geq 0} R_{n, k},
\]
and define
\[
p_{n, k} := \frac{R_{n, k}}{R_n}.
\]

\begin{conj}
\label{conj:obstime}
For each fixed $k \geq 0$ we have
\[
p_{n, k} \asto 2^{- (k + 1)}\mbox{\rm \ as $n \to \infty$}.
\] 
\end{conj}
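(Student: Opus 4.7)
The plan is to apply the second-moment method. Combining \refP{P:denominator} with \refT{T:numerator} gives the first-moment asymptotics
\[
\E R_n = \sum_{i = 1}^n i^{-1} H_i \sim \tfrac{1}{2} (\ln n)^2, \qquad \E R_{n, k} = \sum_{i = 1}^n \P(K_i = k) \sim 2^{-(k + 1)} \E R_n,
\]
so that $\E R_{n, k} / \E R_n \to 2^{-(k + 1)}$. Writing
\[
p_{n, k} = \frac{\E R_{n, k}}{\E R_n} \cdot \frac{R_{n, k} / \E R_{n, k}}{R_n / \E R_n},
\]
the conjecture reduces to showing $R_n / \E R_n \asto 1$ and $R_{n, k} / \E R_{n, k} \asto 1$.

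Since $R_n$ and $R_{n, k}$ are both nondecreasing in~$n$ with expectations tending to infinity, the standard monotonicity trick applies: it suffices to establish convergence (via Chebyshev plus Borel--Cantelli) along a subsequence $n_j$ chosen so that $\E R_{n_{j + 1}} / \E R_{n_j} \to 1$, after which the gaps are closed by monotone sandwiching. This reduces the whole problem to a variance estimate of the form $\Var(R_n), \Var(R_{n, k}) = o\bigl((\E R_n)^2\bigr) = o\bigl((\ln n)^4\bigr)$, possibly slightly strengthened to guarantee summability of $\Var(R_{n_j})/(\E R_{n_j})^2$ along the subsequence; this is exactly the content of the alluded-to \refConj{conj:variance0}.

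The main obstacle is the covariance computation. Writing $R_{n, k} = \sum_{i = 1}^n I_{i, k}$,
\[
\Var(R_{n, k}) \le \E R_{n, k} + 2 \sum_{1 \le i < j \le n} \Cov(I_{i, k}, I_{j, k}),
\]
so everything hinges on controlling $\P(K_i = k, K_j = k) - \P(K_i = k) \P(K_j = k)$. My approach would be to extend the geometric decomposition of \refS{S:numerator} to describe the joint configuration of current records at both times $i - 1$ and $j - 1$ together with the two arriving observations, producing a multiple-integral analogue of \refP{P:exact}. Heuristically, when $j \gg i$ the pool of current records at time $j - 1$ is dominated by observations arriving strictly after time~$i$, so conditioning on $\{K_i = k\}$ should perturb $\P(K_j = k)$ only weakly; one expects a decorrelation bound of the form $\Cov(I_{i, k}, I_{j, k}) = O(1 / (i j))$, possibly with a logarithmic correction, and summing such a bound yields $O((\ln n)^2)$, comfortably within the required slack. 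Converting this heuristic into rigorous joint integrals, and tracking precisely how the past record structure propagates forward to influence $K_j$, is the genuine analytic bottleneck and is precisely the variance calculation that \refS{S:natural} leaves open.
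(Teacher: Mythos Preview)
Your proposal matches the paper's own treatment almost exactly: since \refConj{conj:obstime} is stated as a conjecture, the paper does not prove it but instead carries out precisely the reduction you outline---first-moment asymptotics via \refP{P:denominator} and \refT{T:numerator}, then concentration of $R_n$ and $R_{n,k}$ about their means via a monotonicity/subsequence Borel--Cantelli argument, with the variance bound left as the open \refConj{conj:variance}.

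Two small points of difference are worth noting. First, the paper does not redo the concentration for $R_n$ from scratch: \refL{L:Rn} simply imports the needed variance asymptotics and the almost-sure result $R_n/\rho_n \to 1$ from Bai et al., so only $R_{n,k}$ remains. Second, the paper pushes the reduction one step further than you do: it decomposes $R_{n,k} = R^{(0)}_{n,k} + R^{(1)}_{n,k} + R^{(2)}_{n,k} + R^{(1,2)}_{n,k}$ according to whether the record-setting observation is also a coordinatewise minimum, and disposes of the last three pieces outright by crude $L^2$ bounds (\eqref{Rnk1}--\eqref{R12nk}), leaving only $\Var R^{(0)}_{n,k}$ as the genuine open problem (\refConj{conj:variance0}). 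Your direct attack on $\Cov(I_{i,k}, I_{j,k})$ via a two-time extension of \refP{P:exact} is a reasonable program, but the paper does not attempt it; the decorrelation heuristic you sketch is plausible but not established there. Finally, be aware that your stated target $\Var R_{n,k} = o((\ln n)^4)$ is too weak for the subsequence Borel--Cantelli step along $n_j = 2^j$: summability of $\Var(R_{n_j})/(\E R_{n_j})^2$ requires something closer to $O((\log n)^{3-\epsilon})$, and the paper's \refConj{conj:variance} posits the sharper $O((\log n)^2)$.
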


Here is a proof that \refConj{conj:obstime} implies \refConj{conj:SLLN}.  Working in observations-time, for 
$m \geq 1$, let $T_m$ denote the time at which the $m^{\rm \scriptsize th}$ record is set, so that $R_{T_m} = m$ for all~$m$.  In similar fashion, $R_{T_M, k} = \sum_{m= 1}^M \tI_{m, k}$.  Thus \refConj{conj:SLLN} follows from \refConj{conj:obstime} simply by looking at the sequence $(T_m)$ of $n$-values.~\qed

\subsection{Expectations}
\label{S:expectations}

\refConj{conj:obstime} is certainly plausible, because, as we prove in this subsection, with
\[
\rho_{n, k} := \E R_{n, k}, \quad \rho_n := \E R_n, \quad \phi_{n, k} := \frac{\rho_{n, k}}{\rho_n}
\]
we have
\begin{equation}
\label{phi}
\phi_{n, k} \to 2^{- (k + 1)}\mbox{\rm \ as $n \to \infty$}.
\end{equation}
In the statement of the following lemma, we refer (indirectly) to the second-order harmonic numbers
\[ 
H_n^{(2)} = \frac{\pi^2}{6} - (1 + o(1)) n^{-1}\mbox{\rm\ as $n \to \infty$}, \quad \mbox{where} \quad
H_n^{(r)} := \sum_{i = 1}^n i^{-r}
\]
(aside:\ we shall encounter the fourth-order harmonic numbers in \refS{S:variance})
and (directly) to the second-order \emph{Roman harmonic numbers} (cf.~\cite{Sesma(2017)} and references [16, 22, 23] therein)
\begin{align*}
c_n^{(2)} 
&:= \sum_{i = 1}^n i^{-1} H_i = \half (H_n^2 + H_n^{(2)}) \\ 
&= \half (\L n)^2 + \gamma \L n + \frac{1}{2} \left( \frac{\pi^2}{6} + \gamma^2 \right) + O(n^{-1} \log n).
\end{align*}
The lemma shows that
\[
\hat{\rho}_{n, k} := 2^{- (k + 1)} c_n^{(2)} - (k - 1) 2^{- (k + 2)} H_n
\] 
gives a good approximation to $\rho_{n, k}$.  

\begin{lemma}
\label{L:expectations}
For $n \geq 1$ we have
\begin{equation}
\label{rhon}
\rho_n = c_n^{(2)}
\end{equation}
and, for every $k \geq 0$, also
\begin{equation}
\label{rhonk}
|\hat{\rho}_{n, k} - \rho_{n, k}| \leq \half H_n^{(2)} < 1.
\end{equation}
\end{lemma}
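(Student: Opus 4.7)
The plan is to derive both parts of the lemma by direct summation over $i$ of the per-time-step estimates already established in \refP{P:denominator} and \refT{T:numerator}. By linearity of expectation applied to the definitions $R_n = \sum_{i=1}^n {\bf 1}(K_i \ge 0)$ and $R_{n,k} = \sum_{i=1}^n I_{i,k}$ at~\eqref{Rnkdef}, one has
\[
\rho_n = \sum_{i=1}^n \P(K_i \ge 0), \qquad \rho_{n,k} = \sum_{i=1}^n \P(K_i = k),
\]
so both assertions reduce to summing known single-step formulas.

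For~\eqref{rhon}, I would substitute $\P(K_i \ge 0) = i^{-1} H_i$ from \refP{P:denominator}. The sum $\sum_{i=1}^n i^{-1} H_i$ is, by the definition recalled just before the lemma, exactly the second-order Roman harmonic number $c_n^{(2)}$, and this identity is immediate.

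For~\eqref{rhonk}, the key observation is that the ``main term'' in the bound of \refT{T:numerator}, namely $2^{-(k+1)} i^{-1} H_i - (k-1) 2^{-(k+2)} i^{-1}$, sums over $i = 1, \dots, n$ to $2^{-(k+1)} c_n^{(2)} - (k-1) 2^{-(k+2)} H_n = \hat{\rho}_{n,k}$. Thus
\[
|\hat{\rho}_{n,k} - \rho_{n,k}| \le \sum_{i=1}^n \left| \bigl[ 2^{-(k+1)} i^{-1} H_i - (k-1) 2^{-(k+2)} i^{-1} \bigr] - \P(K_i = k) \right| \le \sum_{i=1}^n \tfrac{1}{2} i^{-2} = \tfrac{1}{2} H_n^{(2)},
\]
where the middle inequality is the triangle inequality and the last uses the per-$i$ bound from \refT{T:numerator}. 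Finally, $H_n^{(2)} < \pi^2/6 < 2$, so $\tfrac{1}{2} H_n^{(2)} < 1$.

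There is no real obstacle here: both parts are bookkeeping consequences of results already proved. The only minor point worth double-checking is that the additive error bound $\tfrac{1}{2} i^{-2}$ from \refT{T:numerator} holds \emph{uniformly in} $k$ (including the boundary cases $k \ge n$, where $\P(K_i = k) = 0$ and the approximation must still be within $\tfrac{1}{2} i^{-2}$), since otherwise summing term-by-term would be invalid. Provided that uniformity is in hand, the argument is a two-line computation for each part of the lemma.
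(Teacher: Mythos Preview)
Your proposal is correct and matches the paper's own proof essentially line for line: the paper simply says to sum \refP{P:denominator} over $i$ for~\eqref{rhon}, and to sum the bound~\eqref{remainder} of \refT{T:numerator} over $i$ for~\eqref{rhonk}, noting $\pi^2/12 < 1$. Your concern about uniformity in~$k$ is already handled, since \refT{T:numerator} is stated and proved for all $n \geq 1$ and all $k \geq 0$ (including the boundary cases $k \geq n$).
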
 

\begin{proof}
For~\eqref{rhon}, just sum the result of \refP{P:denominator} (with~$n$ replaced by~$i$) over~$i$ from~$1$ to~$n$. 
For~\eqref{rhonk}, apply the same operation to~\eqref{remainder} in \refT{T:numerator}, observing 
$\pi^2 / 12 < 1$.
\end{proof}

\begin{remark}
\label{R:phi}
From \refL{L:expectations} it is an immediate corollary that
\[
\sup_{k \geq 0} \left| \phi_{n, k} - \left[ 2^{- (k + 1)} - (k - 1) 2^{- (k + 2)} \frac{H_n}{c_n^{(2)}} \right] \right| 
< \frac{1}{c_n^{(2)}} \sim (\L n)^{-2};
\]
in particular, \eqref{phi} holds, uniformly in~$k$.
\end{remark}

\subsection{Reduction to a variance calculation}
\label{S:reduction}

In light of \refL{L:expectations}, to establish $p_{n, k} \Pto 2^{- (k + 1)}$ as $n \to \infty$ it would be sufficient to establish concentration of measure for the distributions of the denominator $R_n$ and the numerator 
$R_{n, k}$ of $p_{n, k}$---for example, by means of variance bounds combined with Chebyshev's inequality.  As we will explain in this subsection, we already know about the variance of $R_n$, and if we were to bound the variance of $R_{n, k}$ in suitably similar fashion we could prove not only convergence in probability but also the almost sure convergence of \refConj{conj:SLLN}.

The following results concerning $R_n$ are implied by \cite[Thms.~4.1(b), 4.2(a)]{Fillboundary(2018)} (with the mean, variance, and central limit theorem results there taken from Bai et al.~\cite{Bai(2005), Bai(1998)}) after specializing to our present case of dimension $d = 2$.

\begin{lemma}
\label{L:Rn}
Let~$\Phi$ denote the standard normal distribution function.  The number $R_n$ of records set through time~$n$ satisfies
\begin{align*}
\rho_n &= \E R_n = \tfrac{1}{2} (\L n)^2 + \gamma \L n + \left( \tfrac{\pi^2}{12} + \half \gamma^2 \right) + o(1), \\ 
\sigma_n^2 &:= \Var R_n \sim \left( \tfrac{\pi^2}{6} + \gamma^2 \right) (\L n)^2,
\end{align*}
\[
\sup_x \left| \P\left( \frac{R_n - \rho_n}{\sigma_n} < x \right) - \Phi(x) \right|
= O((\log n)^{-  1 / 2} (\log \log n)^3),
\]
\begin{equation}
\label{LD}
\P\left( |R_n - \rho_n| \geq (\L n)^{\frac{3}{2} + \epsilon}\mbox{\rm \ \io} \right) 
= 0\mbox{\rm \ if $\epsilon > 0$},
\end{equation}
and consequently
\begin{equation}
\label{as}
\frac{R_n}{\rho_n} \asto 1.~\qed
\end{equation}
\end{lemma}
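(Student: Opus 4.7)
The plan is essentially to assemble the four conclusions from results already developed in this paper together with the two cited sources; no fundamentally new probabilistic argument is needed, only invocation and specialization.

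First, for the mean expansion I would invoke equation \eqref{rhon} of \refL{L:expectations}, which identifies $\rho_n$ exactly with the second-order Roman harmonic number $c_n^{(2)}$. The asymptotic expansion of $c_n^{(2)}$ displayed just before \refL{L:expectations} already gives $\rho_n = \tfrac{1}{2}(\L n)^2 + \gamma \L n + (\pi^2/12 + \tfrac{1}{2}\gamma^2) + O(n^{-1}\log n)$, which is stronger than the claimed $o(1)$ remainder, so the first displayed equation requires no further work.

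Next, for the variance asymptotic and the Berry--Esseen-type bound, I would appeal directly to the $d$-dimensional results of Bai et al.~\cite{Bai(2005), Bai(1998)}, as the statement already indicates. Those papers supply both the exact order of $\Var R_n$ (with a dimension-dependent constant) and the uniform Kolmogorov-distance bound $O((\log n)^{-1/2}(\log \log n)^3)$; specializing their formulas to $d = 2$ yields the leading constant $\pi^2/6 + \gamma^2$ and the stated rate. For the almost-sure fluctuation bound \eqref{LD} I would invoke \cite[Thm.~4.2(a)]{Fillboundary(2018)} specialized to $d = 2$.

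Finally, \eqref{as} is a short deduction from \eqref{LD}: since $\rho_n \sim \tfrac{1}{2}(\L n)^2$, for every $\epsilon > 0$ we have $(\L n)^{3/2 + \epsilon}/\rho_n \to 0$, so on the almost-sure event on which $|R_n - \rho_n| < (\L n)^{3/2+\epsilon}$ holds eventually we conclude $R_n/\rho_n \to 1$.

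The only real obstacle is bookkeeping rather than mathematics: one must translate the notation of \cite{Bai(2005), Bai(1998), Fillboundary(2018)} (which is stated in arbitrary dimension $d$, and in places for record-\emph{large} rather than record-\emph{small} values) into the conventions of the present paper, and carefully verify that specialization to $d = 2$ produces the precise constants $\pi^2/12 + \tfrac{1}{2}\gamma^2$ and $\pi^2/6 + \gamma^2$ displayed in the statement.
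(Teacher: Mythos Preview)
Your proposal is correct and matches the paper's approach: the lemma is stated with a terminal \qed{} and the paper supplies no proof beyond the prefatory sentence that the results ``are implied by \cite[Thms.~4.1(b), 4.2(a)]{Fillboundary(2018)} (with the mean, variance, and central limit theorem results there taken from Bai et al.~\cite{Bai(2005), Bai(1998)}) after specializing to our present case of dimension $d = 2$.'' Your only deviation is to obtain the mean expansion from the internal identity $\rho_n = c_n^{(2)}$ of \refL{L:expectations} rather than from the external citation, which is harmless since \refL{L:expectations} precedes \refL{L:Rn} in the text and itself rests only on \refP{P:denominator}.
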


A careful review of the proof of~\eqref{LD} (a first Borel--Cantelli argument applied along a geometrically increasing sequence of times), which immediately implies~\eqref{as}, shows that to establish~\eqref{LD} it is sufficient to know that the samples paths of the process~$R$ are nondecreasing, that
\[
\rho_n = a (\L n)^2 + b (\L n) + O(1)
\]
for some constants $a > 0$ and $b$, that $\sigma^2_n = O((\log n)^2)$, and that
\[
\rho_n - \rho_{n - 1} = \Theta(n^{-1} \log n).
\]
Now observe, for each fixed $k \geq 0$, that the sample paths of the process $R_{\cdot, k}$ are nondecreasing, that
\[
\rho_{n, k} = a_k (\L n)^2 + b_k (\L n) + O(1)
\]
with $a_k = 2^{- (k + 2)} > 0$ and $b_k = - 2^{- (k + 2)} (k - 2 \gamma - 1)$, and that
\[
\rho_{n, k} - \rho_{n - 1, k} = \P(K_n = k) = \Theta(n^{-1} \log n),
\] 
with the last equality holding by \refT{T:numerator}.  Thus the analogues of \eqref{LD}--\eqref{as} 
for $R_{\cdot, k}$ hold if we can establish that
\begin{equation}
\label{variancedef}
\sigma_{n, k}^2 := \Var R_{n, k}
\end{equation}
satisfies $\sigma_{n, k}^2 = O((\log n)^2)$, which (in light of the known corresponding result for~$R$) seems eminently reasonable to conjecture.

\begin{conj}
\label{conj:variance}
For each fixed $k \geq 0$, the variance $\sigma^2_{n, k}$ defined at~\eqref{variancedef} satisfies
\[
\sigma_{n, k}^2 = O((\log n)^2).
\]
\end{conj}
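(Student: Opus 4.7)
My starting point is the elementary decomposition
\begin{equation*}
\Var R_{n,k} = \sum_{i=1}^n \Var I_{i,k} + 2 \sum_{1 \leq i < j \leq n} \Cov(I_{i,k}, I_{j,k}).
\end{equation*}
The diagonal contribution is handled at once: since $I_{i,k}$ is an indicator, $\Var I_{i,k} \leq \P(K_i = k)$, and \refT{T:numerator} gives $\P(K_i = k) = O(i^{-1} \log i)$ for each fixed~$k$, so $\sum_{i=1}^n \Var I_{i,k} = O((\log n)^2)$. The burden therefore falls entirely on the off-diagonal covariance sum.

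To analyze $\Cov(I_{i,k}, I_{j,k})$ for $i < j$, I would exploit the Markov structure of the bivariate records process.  Let $\cR_n$ denote the (random) set of current records at time~$n$ and $\cF_i$ the $\sigma$-algebra generated by $\XX^{(1)}, \ldots, \XX^{(i)}$; then $(\cR_n)_{n \geq 0}$ is a Markov chain with respect to $(\cF_n)$, and the event $\{K_j = k\}$ depends only on $\XX^{(j)}$ and on $\cR_{j-1}$.  Consequently $\P(K_j = k \mid \cF_i) = g_{i,j,k}(\cR_i)$ for some deterministic function $g_{i,j,k}$, and
\begin{equation*}
\Cov(I_{i,k}, I_{j,k}) = \E\bigsqpar{I_{i,k} \bigpar{g_{i,j,k}(\cR_i) - \E g_{i,j,k}(\cR_i)}}.
\end{equation*}
The matter reduces to showing that $g_{i,j,k}(\cR_i)$ fluctuates little around its mean as $\cR_i$ ranges over typical configurations.

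The third step is to derive a workable formula for $g_{i,j,k}$ by extending the combinatorial bookkeeping of \refP{P:exact}.  Given an initial configuration $\cR_i$, one partitions the unit square by the staircase determined by these current records and integrates over the subsequent $j - i - 1$ observations, tracking the event that $\XX^{(j)}$ lands in a local configuration killing exactly~$k$ current records.  I would expect an expansion of the form
\begin{equation*}
g_{i,j,k}(\cR_i) = 2^{-(k+1)}(j-i)^{-1} H_{j-i} + \textup{correction},
\end{equation*}
where the correction depends on $\cR_i$ only through a short list of local summary statistics (roughly, the positions of the current records closest to the axes).  Combined with the almost-sure bound $r_i = O(\log i)$ on $|\cR_i|$, this would yield a pointwise estimate of the covariance.

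The main obstacle is quantitative sharpness.  A crude bound $|\Cov(I_{i,k}, I_{j,k})| \leq \P(K_i = k)$ yields a covariance sum of order only $(\log n)^3$, overshooting the target by one logarithmic factor.  Recovering the conjectured $(\log n)^2$ therefore requires exhibiting genuine cancellation between $g_{i,j,k}(\cR_i)$ and its mean.  A possibly cleaner alternative is the Doob-martingale route: set $M_i := \E[R_{n,k} \mid \cF_i]$, so that $\Var R_{n,k} = \sum_i \E(M_i - M_{i-1})^2$, and bound each martingale increment directly in $L^2$.  Since $R_{n,k} \leq R_n$, one may hope to dominate the increments for $R_{n,k}$ by those for $R_n$ and leverage the known bound $\Var R_n = O((\log n)^2)$ recorded in \refL{L:Rn}; however, making this heuristic rigorous appears to require essentially the same local/global analysis of $\cR_i$ sketched above.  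In either approach, sharpening the one-time asymptotics of \refT{T:numerator} into a stable two-time joint asymptotic for $\P(K_i = k, K_j = k)$ seems to be the core technical hurdle.
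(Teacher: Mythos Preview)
The statement you are attempting to prove is left \emph{open} in the paper: it is explicitly a Conjecture, not a theorem, and the paper states that the variance calculation is ``left for future research.''  So there is no proof in the paper to compare against; rather, the paper offers only partial progress toward the conjecture, and the relevant question is how your proposed attack compares with that partial progress.

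The paper's route is quite different from yours.  Instead of decomposing $\Var R_{n,k}$ over time indices $i < j$ as you do, the paper decomposes the indicator $I_{n,k}$ according to whether the $n^{\rm th}$ observation sets a univariate record in neither, one, or both coordinates, writing $R_{n,k} = R^{(0)}_{n,k} + R^{(1)}_{n,k} + R^{(2)}_{n,k} + R^{(1,2)}_{n,k}$.  The last three pieces are dominated by simple univariate record counts whose $L^2$ norms are known exactly, and the triangle inequality then reduces the full conjecture to bounding $\Var R^{(0)}_{n,k}$ (the paper's \refConj{conj:variance0}).  The paper also observes (Remark~\ref{R:smaller}(c)\,/\,\ref{R:stronger Rnk conjectures}(c)) that if the $R_{n,k}$ were positively correlated across~$k$ for fixed~$n$, the conjecture would follow immediately from the known bound $\Var R_n = O((\log n)^2)$, with no further work.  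Your approach, by contrast, tries to control the two-time covariances $\Cov(I_{i,k}, I_{j,k})$ directly via the Markov structure of the current-record frontier; this is a natural line, and your honest assessment that a crude bound only yields $(\log n)^3$ and that the missing logarithm requires genuine cancellation (or a two-time refinement of \refT{T:numerator}) is exactly right.

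In short: neither your proposal nor the paper closes the gap.  The paper's reduction buys a structural simplification (only the ``interior'' case $R^{(0)}_{n,k}$ remains), while your covariance/martingale framework identifies precisely where the difficulty lies (the joint law of $(K_i, K_j)$).  A complete proof would presumably need ideas from both.
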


A summary of this subsection is that \refConj{conj:variance} would imply \refConj{conj:obstime} and therefore also \refConj{conj:GC}.

\begin{remark}
\label{R:stronger Rnk conjectures}
(a)~Use of the refinement~\eqref{LD} to~\eqref{as} shows that \refConj{conj:variance} would imply the refinement
\[
p_{n, k} = 2^{- (k + 1)}[1 + O((\log n)^{- (1 / 2) + \epsilon})]\mbox{\rm \ \as}
\]
of \refConj{conj:obstime} for each fixed $k \geq 0$ and any $\epsilon > 0$.

(b)~More than \refConj{conj:variance}, we conjecture that for each fixed $k \geq 0$ we have
\[
\sigma_{n, k}^2 \sim s_k^2 (\L n)^2
\]
for some constants $s_k^2 > 0$ satisfying $s_k^2 \to 0$ as $k \to \infty$ (likely with 
$s_k \equiv 2^{- (k + 1)} s$, letting $s^2 := \frac{\pi^2}{6} + \gamma^2$), and that there is asymptotic normality for $R_{n, k}$.  It seems reasonable to conjecture that, moreover, the random vector $(R_{n, 1}, \dots, R_{n, k})$ enjoys full-dimensional asymptotic $k$-variate normality.

(c)~It may be that the random variables $R_{n, k}$ are positively correlated for fixed~$n$ as~$k$ varies, the idea being that larger values of~$R_n$ (more records) should lead to larger values of $R_{n, k}$ (more records that break~$k$ remaining records) for every~$k$.  If this positive correlation were to be known, then \refConj{conj:variance} would follow immediately, without the need for additional calculations.  Indeed, for large~$n$ and fixed~$k$ we would then have
\[
\sigma_{n, k}^2 \leq \sum_{j = 1}^n \sigma_{n, j}^2 \leq \sigma^2_n \sim s^2 (\L n)^2.
\]
\end{remark}

\subsection{Reduction of the variance calculation}
\label{S:variance}

Corresponding to the breakdown into cases utilized in \refS{S:numerator}, observe that 
$I_{n, k} = {\bf 1}(K_n = k)$ satisfies
\[
I_{n, k} = I^{(0)}_{n, k} + I^{(1)}_{n, k} + I^{(2)}_{n, k} + I^{(1, 2)}_{n, k},
\]
where the four terms here are the respective indicators of the events
\[
\begin{array}{llll}
\{ K_n = k,\ \mbox{$\XX^{(n)}$ does not set a record in either coordinate} \}, \\
\{ K_n = k,\ \mbox{$\XX^{(n)}$ sets a record in the first coordinate but not the second} \}, \\
\{ K_n = k,\ \mbox{$\XX^{(n)}$ sets a record in the second coordinate but not the first} \}, \\
\{ K_n = k,\ \mbox{$\XX^{(n)}$ sets a record in both coordinates} \}. \\
\end{array}
\]
By analogy with~\eqref{Rnkdef}, define respective record counts
$R^{(0)}_{n, k}, R^{(1)}_{n, k}, R^{(2)}_{n, k}, R^{(1, 2)}_{n, k}$, so that
\begin{equation}
\label{Rbreakdown}
R_{n, k} = R^{(0)}_{n, k} + R^{(1)}_{n, k} + R^{(2)}_{n, k} + R^{(1, 2)}_{n, k}.
\end{equation}
It thus seems daunting to calculate $\sigma^2_{n, k}$ to prove~\refConj{conj:variance}.  But in this subsection we argue by means of suitable control of all but the first term in~\eqref{Rbreakdown} that
\[
\sigma_{n, k}^2 = \Var R^{(0)}_{n, k} + O((\log n)^2),
\]
for fixed~$k$, thus reducing proof of \refConj{conj:variance} to proof of the following simpler conjecture.

\begin{conj}
\label{conj:variance0}
For each fixed $k \geq 0$ we have
\[
\Var R^{(0)}_{n, k} = O((\log n)^2).
\]
\end{conj}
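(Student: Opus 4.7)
I would begin with the standard variance decomposition
\[
\Var R^{(0)}_{n, k} = \sum_{i = 1}^n \Var I^{(0)}_{i, k} + 2 \sum_{1 \leq i < j \leq n} \Cov(I^{(0)}_{i, k}, I^{(0)}_{j, k}),
\]
and dispatch the diagonal sum by $\Var I^{(0)}_{i, k} \leq \E I^{(0)}_{i, k} \leq \P(K_i = k)$ and \refT{T:numerator}: summation gives $\sum_{i = 1}^n \Var I^{(0)}_{i, k} \leq 2^{-(k + 1)} c_n^{(2)} + O(\log n) = O((\log n)^2)$. All the content of \refConj{conj:variance0} therefore lies in showing that the off-diagonal sum of covariances is $O((\log n)^2)$. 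A naive bound $\Cov(I^{(0)}_{i, k}, I^{(0)}_{j, k}) \leq \E I^{(0)}_{i, k} = \Theta(i^{-1} \log i)$ would yield only $O((\log n)^4)$, so substantial cancellation must be exploited.

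My first attempt would extend the joint-density analysis of \refP{P:exact}(i) to an explicit formula for $\P(I^{(0)}_{i, k} = 1,\, I^{(0)}_{j, k} = 1)$: integrate the joint density of $\XX^{(i)}, \XX^{(j)}$ together with their respective $k + 2$ bordering current records over admissible configurations. At each time the empty-box probability for the corresponding exclusion region factors through exactly as in the proof of \refP{P:exact}, with the one subtlety that the exclusion region at the later time~$j$ must be reconciled with the location of $\XX^{(i)}$ and of any record it may have set. Comparison with $\P(I^{(0)}_{i, k} = 1) \P(I^{(0)}_{j, k} = 1)$ should then yield cancellation of the leading-order ``independent'' contribution, leaving a remainder summable to $O((\log n)^2)$.

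As a backup, I would pursue an Efron--Stein approach: letting $\widetilde R^{(0)}_{n, k, i}$ be the statistic recomputed after replacing $\XX^{(i)}$ with an independent copy, bound
\[
\Var R^{(0)}_{n, k} \leq \tfrac{1}{2} \sum_{i = 1}^n \E\bigl[ \bigl( R^{(0)}_{n, k} - \widetilde R^{(0)}_{n, k, i} \bigr)^2 \bigr].
\]
If neither copy of $\XX^{(i)}$ becomes a current record at time~$i$, the discrepancy is at most~$1$, so only the event ``$\XX^{(i)}$ is a record,'' of probability $\Theta(i^{-1} \log i)$ by \refP{P:denominator}, contributes nontrivially. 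In that case one must quantify how a perturbation of the time-$i$ staircase propagates through the future indicators $I^{(0)}_{j, k}$; logarithmic-scale propagation bounds should be accessible from the fluctuation estimates for~$r_n$ implicit in \refL{L:Rn} and the techniques of \cite{Bai(2005)}, aiming for $\E[( R^{(0)}_{n, k} - \widetilde R^{(0)}_{n, k, i})^2] = O(i^{-1} \log n)$, which sums to the desired order.

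The principal obstacle I expect is the geometric bookkeeping imposed by the interior constraint: $I^{(0)}_{i, k} = 1$ requires not only $K_i = k$ but also that the bordering points $\XX_0, \XX_{k + 1}$ be genuine records rather than the sentinels $\ee_1, \ee_2$, which doubles the free coordinates in the joint-density calculation and complicates any propagation argument. For this reason the cleanest route may lie in a form of positive association analogous to \refR{R:stronger Rnk conjectures}(c) but applied to the refined vector $(R^{(0)}_{n, 0}, R^{(0)}_{n, 1}, \dots)$: if its components can be shown nonnegatively correlated, then $\sum_k R^{(0)}_{n, k} \leq R_n$ together with \refL{L:Rn} would yield \refConj{conj:variance0} at once, bypassing the covariance estimates entirely.
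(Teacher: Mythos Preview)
The statement you are attempting to prove is labeled in the paper as \refConj{conj:variance0}, and it is genuinely a conjecture there: the paper does \emph{not} prove it. The authors explicitly reduce the Glivenko--Cantelli \refConj{conj:GC} to this variance bound and then write that they ``leave that calculation for future research.'' There is therefore no paper proof to compare your proposal against.

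That said, your proposal is a reasonable research outline rather than a proof, and you seem aware of this. A few comments on how it relates to what the paper does contain. Your diagonal bound is correct and immediate from \refT{T:numerator}. Your third route---positive association---is essentially the idea the authors float in \refR{R:stronger Rnk conjectures}(c), though they state it for the full $R_{n,k}$ rather than the interior pieces $R^{(0)}_{n,k}$; they too note that it would bypass the covariance calculation entirely, but they give no indication of how to establish the required correlation inequality. Your first and second routes (joint-density extension of \refP{P:exact}(i); Efron--Stein) are not discussed in the paper at all. Each is plausible as a strategy, but neither is carried to the point of an actual bound: the phrases ``should then yield cancellation'' and ``logarithmic-scale propagation bounds should be accessible'' are precisely the unproven steps. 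In particular, the Efron--Stein target $\E[(R^{(0)}_{n,k} - \widetilde R^{(0)}_{n,k,i})^2] = O(i^{-1}\log n)$ is a strong claim about how a single perturbation propagates through \emph{all} future killing counts, and nothing in \refL{L:Rn} or the cited Bai et~al.\ results directly controls that quantity.

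In short: there is no gap relative to the paper, because the paper proves nothing here; but your proposal, while well organized, remains a sketch of directions rather than a proof.
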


Here is a proof that \refConj{conj:variance0} would imply \refConj{conj:variance}.  By the triangle inequality for $L^2$-norm $\| \cdot \|_2$, in obvious notation we have
\begin{equation}
\label{control}
\sigma_{n, k} - \sigma^{(0)}_{n, k}
\leq 
\sigma^{(1)}_{n, k} + \sigma^{(2)}_{n, k} + \sigma^{(1, 2)}_{n, k} 
= 2 \sigma^{(1)}_{n, k} + \sigma^{(1, 2)}_{n, k}
\end{equation}

But, with $R^{(1)}_n$ counting the number of records through time~$n$ in the first coordinate, we have 
\begin{align}
\Var R^{(1)}_{n, k}
&\leq \left\| R^{(1)}_{n, k} \right\|_2^2 
\leq \left\| R^{(1)}_n \right\|_2^2 
= \left[ \E R^{(1)}_n \right]^2 + \Var R^{(1)}_n 
= H_n^2 + [H_n - H^{(2)}_n] \nonumber \\ 
\label{Rnk1}
&= O((\log n)^2);
\end{align}
and, with $R^{(1, 2)}_n$ counting the number of observations through time~$n$ that set a record in both coordinates, we have  
\begin{align}
\Var R^{(1, 2)}_{n, k}
\nonumber
&\leq \left\| R^{(1, 2)}_{n, k} \right\|_2^2 
\leq \left\| R^{(1, 2)}_n \right\|_2^2 
= \left[ \E R^{(1, 2)}_n \right]^2 + \Var R^{(1, 2)}_n \\
\nonumber 
&= (H^{(2)}_n)^2 + [H^{(2)}_n - H^{(4)}_n] \\
\label{R12nk} 
&= O(1) = o((\log n)^2).
\end{align}
Thus, returning to~\eqref{control} and applying the inequality $(a + b)^2 \leq 2 (a^2 + b^2)$, we find
\[
\sigma^2_{n, k} \leq \left[ \sigma^{(0)}_{n, k} + O(\log n) \right]^2 \leq 2 \Var R^{(0)}_{n, k} + O((\log n)^2),
\]
and so \refConj{conj:variance0} would imply \refConj{conj:variance}.~\qed

\begin{remark}
\label{R:smaller}

(a)~Observe that $R^{(1, 2)}_{n, 0} = 1$ for every $n \geq 1$, and so $\Var R^{(1, 2)}_{n, 0} = 0$.
For $k \geq 1$, we claim that~\eqref{R12nk} can be strengthened to $\Var R^{(1, 2)}_{n, k} = \Theta(1)$.  
To establish the lower bound $\Var R^{(1, 2)}_{n, k} = \Omega(1)$ matching the upper bound \eqref{R12nk}, we perform two computations.  The first, valid for $n \geq 2 k + 1$, is that
\[
\P\left( R^{(1, 2)}_{n, k} \geq 2 \right)
\geq \P\left( R^{(1, 2)}_{2 k + 1, k} = 2 \right) 
=  \P\left( R^{(1, 2)}_{k + 1, k} = 1, R^{(1, 2)}_{2 k + 1, k} = 2 \right)  
> 0,
\]
and the other, valid for $n \geq k + 1$, is that
\begin{align*}
\P\left( R^{(1, 2)}_{n, k} = 1 \right)
&\geq \P\left( R^{(1, 2)}_{k + 1, k} = 1,\ R^{(1, 2)}_{n, k} = 1 \right) \\
&\geq \P\left( R^{(1, 2)}_{k + 1, k} = 1 \right) \P\left( R^{(1, 2)}_{n - k} = 1 \right) \\
&= \P\left( R^{(1, 2)}_{k + 1, k} = 1 \right) \mbox{$\prod$}_{i = 2}^{n - k} (1 - i^{-2}) \\
&= \half \P\left( R^{(1, 2)}_{k + 1, k} = 1 \right) [1 + (n - k)]^{-1} \\
&\geq \half \P\left( R^{(1, 2)}_{k + 1, k} = 1 \right) > 0.      
\end{align*}

(b)~We conjecture that~\eqref{Rnk1} can be strengthened to $\Var R^{(1)}_{n, k} = \Theta(\log n)$.  If we knew even the upper bound $\Var R^{(1)}_{n, k} = O(\log n)$, then it would follow from~\eqref{control} and the matching upper bound on $\sigma^{(0)}_{n, k} - \sigma_{n, k}$ that
\[
\sigma_{n, k} = \sigma^{(0)}_{n, k} + O((\log n)^{1/2}).
\]
In that way, if one could prove the conjecture that $\sigma^{(0)}_{n, k} \sim s_k \L n$ for some constant 
$s_k > 0$, then the same lead-order asymptotics would apply to $\sigma_{n, k}$. 
\end{remark}

\begin{acks}
We thank Vince Lyzinski, Daniel~Q.\ Naiman and Fred Torcaso for helpful comments, and Daniel~Q.\ Naiman for producing \refF{F:figure}.
\end{acks}

\bibliography{records.bib}
\bibliographystyle{plain} 
\end{document}